\newtheorem{theorem}{Theorem}[section]
\newtheorem{lemma}[theorem]{Lemma}
\newtheorem{definition}{Definition}
\newtheorem{problem}[theorem]{Problem}
\newcommand{\C}{\mathcal{C}}
\newcommand{\F}{\mathcal{F}}
\newcommand{\R}{\mathbb R}
\newcommand{\N}{\mathbb N}
\newcommand{\Ha}{\mathcal{H}}
\newcommand{\Le}{\mathcal{L}}
\newcommand{\eps}{\varepsilon}
\newcommand{\vv}{\boldsymbol{v}}
\newcommand{\1}{\boldsymbol 1}
\newcommand{\0}{\boldsymbol 0}
\newcommand{\BO}{\boldsymbol{O}}
\newcommand{\x}{\boldsymbol{x}}
\newcommand{\y}{\boldsymbol{y}}
\newcommand{\dd}{\boldsymbol{d}}
\newcommand{\e}{\boldsymbol{e}}
\newcommand{\ba}{\boldsymbol{a}}
\newcommand{\bb}{\boldsymbol{b}}
\title{A continuous analogue of Erd\H{o}s' $k$-Sperner theorem}
\author{
Themis Mitsis\thanks{Department of Mathematics and Applied Mathematics, University of Crete, 70013 Heraklion, Greece. 
E-mail: themis.mitsis@gmail.com}
\and
Christos Pelekis\thanks{Institute of Mathematics, Czech Academy of Sciences, \v{Z}itna 25, Praha 1, Czech Republic. Research supported by GA\v{C}R project 18-01472Y  and RVO: 67985840. E-mail: pelekis.chr@gmail.com}
\and
V\'aclav Vlas\'ak\thanks{ Faculty of Mathematics and Physics, Charles University, Sokolovsk\'a 83, 18675 Praha 8, Czech Republic. E-mail:  vlasakvv@gmail.com}
}
\begin{document}
\maketitle

\begin{abstract}
A \emph{chain} in the unit $n$-cube is a set $C\subset [0,1]^n$ such that  for every 
$\mathbf{x}=(x_1,\ldots,x_n)$ and $\mathbf{y}=(y_1,\ldots,y_n)$ in  $C$ we either have  
$x_i\le y_i$ for all $i\in [n]$, or $x_i\ge y_i$ for all $i\in [n]$. We show that the $1$-dimensional Hausdorff measure of a chain in the unit $n$-cube is at most $n$, and that the bound is sharp. Given this result, we consider the problem of maximising the $n$-dimensional Lebesgue measure of a measurable set $A\subset [0,1]^n$ subject to the constraint that it satisfies 
$\Ha^1(A\cap C) \le \kappa$ for all chains $C\subset [0,1]^n$, 
where $\kappa$ is a fixed real number from the interval  $(0,n]$. We show that the measure of $A$ is not larger than the measure of the following optimal set:  
\[
A^{\ast}_{\kappa} = \left\{ (x_1,\ldots,x_n)\in [0,1]^n : \frac{n-\kappa}{2}\le \sum_{i=1}^{n}x_i \le  \frac{n+ \kappa}{2} \right\} \, .
\]
Our result may be seen as a continuous counterpart to a theorem of Erd\H{o}s, regarding $k$-Sperner families of finite sets. 
\end{abstract}

\noindent{
\emph{Keywords}: chains; $k$-Sperner families; Hausdorff measure; Lebesgue measure 
}
	
\noindent{
\emph{MSC (2010)}:     05D05; 28A78;  05C35
}

\section{Prologue, related work and main results}

Let $[n]$ denote the set of positive integers $\{1,\ldots,n\}$, and $2^{[n]}$ denote the power-set of $[n]$. A family $\C\subset 2^{[n]}$ is called a \emph{chain} if for every distinct $C_1,C_2\in \C$  we either have $C_1\subset C_2$ or $C_2\subset C_1$. We assume that the chains under consideration do not contain the empty set. 
Here and later, the cardinality of a finite set $F$ is denoted $|F|$. Let $k \in [n]$ be a positive integer. 
A family $\F\subset 2^{[n]}$ is 
called \emph{$k$-Sperner} if there is no chain $\C\subset\F$ such that $|\C|=k+1$.  In other words, a $k$-Sperner family is a collection $\F\subset 2^{[n]}$ such that $|\F\cap \C|\le k$, for all chains $\C\subset 2^{[n]}$.  Given two points $\mathbf{x}=(x_1,\ldots, x_n)$ and $\mathbf{y}=(y_1,\ldots,y_n)$ in $\R^n$, we write $\mathbf{x} \le \mathbf{y}$ if $x_i\le y_i$, for all $i\in [n]$. 

Let us begin with a well-known result of Erd\H{o}s, that  provides a sharp upper bound on the size of  $k$-Sperner families. 

\bigskip

\begin{theorem}[Erd\H{o}s~\cite{Erdos}]
\label{k_sperner}
Let $\F$ be a $k$-Sperner family of $2^{[n]}$.  Then the cardinality of $\F$ is not greater than the sum of the $k$ largest binomial coefficients. 
\end{theorem}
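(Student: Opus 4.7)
The plan is to prove the bound via the classical symmetric chain decomposition (SCD) of the Boolean lattice $2^{[n]}$. Recall that an SCD is a partition of $2^{[n]}$ into chains $\C_1,\ldots,\C_N$ in which every chain $\C_j$ is \emph{symmetric}: its elements have sizes $s_j, s_j+1, \ldots, n-s_j$ for some integer $0\le s_j \le n/2$. The existence of such a decomposition is a standard result (constructed explicitly, for instance, by an inductive bracket-matching argument), so I would cite rather than reprove it.

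Given an SCD, the $k$-Sperner hypothesis immediately yields $|\F\cap\C_j|\le\min(k,|\C_j|)$ for every $j$, since each $\C_j$ is itself a chain. Summing over the partition,
\[
|\F| \;=\; \sum_{j=1}^N |\F\cap\C_j| \;\le\; \sum_{j=1}^N \min(k,|\C_j|).
\]
The remaining task is to identify the right-hand side with the sum of the $k$ largest binomial coefficients. Let $S\subset\{0,1,\ldots,n\}$ denote the set of the $k$ levels closest to $n/2$; unimodality and symmetry of $\binom{n}{i}$ ensure that $\sum_{i\in S}\binom{n}{i}$ is the sum of the $k$ largest binomials. Double counting sets whose size lies in $S$,
\[
\sum_{i\in S}\binom{n}{i} \;=\; \sum_{j=1}^N \bigl|\{A\in\C_j : |A|\in S\}\bigr|.
\]
Because $\C_j$ is symmetric about $n/2$ and contains one set at each consecutive size in its range, while $S$ is a symmetric consecutive block of $k$ levels about $n/2$, the inner intersection has cardinality exactly $\min(k,|\C_j|)$. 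Combining the two displays gives the theorem.

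The only non-trivial input here is the SCD itself; I do not view this as a real obstacle, since the construction is explicit and widely known. If one preferred to avoid SCD, a second route is the weighted LYM inequality: double counting pairs $(\pi,A)$ with $\pi$ a maximal chain in $2^{[n]}$ and $A\in\F\cap\pi$ yields $\sum_{A\in\F}\binom{n}{|A|}^{-1}\le k$, after which a one-line linear-programming argument (assigning all available weight to the levels of largest $\binom{n}{i}$) again recovers the same bound. Either approach is elementary; the step to verify most carefully is the level-counting identity that equates the chain-decomposition sum with the sum of central binomial coefficients.
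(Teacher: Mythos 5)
The paper states Theorem~\ref{k_sperner} as a cited result of Erd\H{o}s~\cite{Erdos} and does not reprove it, so there is no in-paper argument to compare against; your proof must therefore be judged on its own. It is correct, and the symmetric-chain-decomposition route you take is a standard modern proof of the $k$-Sperner bound. The one step you rightly flag---that a symmetric chain $\C_j$ meets the block $S$ of $k$ central levels in exactly $\min(k,|\C_j|)$ sets---does need a short parity check: when $n-k$ is even the block $S$ cannot be centered exactly at $n/2$ (it is off by one in either direction), whereas each symmetric chain is; nevertheless, writing the chain's level range as $[s_j,\,n-s_j]$ (so $|\C_j|=n-2s_j+1$) and $S=[a,\,a+k-1]$ with $a\in\{\lfloor(n-k+1)/2\rfloor,\lceil(n-k+1)/2\rceil\}$, one verifies in all parity cases that the intersection has size $\min(k,\,n-2s_j+1)$, so the identity holds for either admissible choice of $S$. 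Your LYM alternative is also sound: double counting over the $n!$ maximal chains gives $\sum_{A\in\F}\binom{n}{|A|}^{-1}\le k$, and the linear program maximizing $\sum_i f_i$ subject to $\sum_i f_i\binom{n}{i}^{-1}\le k$ and $0\le f_i\le\binom{n}{i}$ is optimized by saturating the $k$ levels of largest $\binom{n}{i}$, recovering the same bound. It is worth noting that the symmetric chain decomposition of $2^{[n]}$ is due to de Bruijn, Tengbergen and Kruyswijk and postdates Erd\H{o}s's 1945 paper, so neither of your routes reconstructs Erd\H{o}s's original argument; both are, however, complete and elementary proofs of the stated inequality.
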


For $k=1$, Theorem~\ref{k_sperner} is due to Sperner (see~\cite{sperner}). The notion of   
$k$-Sperner families is fundamental in extremal set theory and has inspired a vast amount of research.  
We refer the reader to~\cite{Anderson, Engel_Sperner} for legible textbooks on the topic. In this article we shall be interested in a continuous analogue of Erd\H{o}s' result. 
It has been almost half a century (see~\cite{Bollobas, katona_one, katona_two, Nash-Williams}) since the idea was conceived that several results from extremal combinatorics have continuous counterparts. 
This idea has inspired several continuous analogues of results from extremal combinatorics both in a ``measure-theoretic setting'' (see, for example, \cite{Bollobas, Dolezal_Mitsis_Pelekis, engel, EMPR, katona_one, katona_two})  
 and in a ``vector space setting'' (see, for example, \cite{FranklWilson, klain_rota}). 
 In this article we investigate a continuous analogue of Theorem~\ref{k_sperner}. 
 Let us proceed by stating a result due to Konrad Engel~\cite{engel} that is similar to our main result. Here and later, $\Le^n(\cdot)$ denotes $n$-dimensional Lebesgue measure.  
 
 \bigskip
 
\begin{theorem}[Engel~\cite{engel}]
\label{th:01} Let $\kappa>0$ be a real number and let
$A$ be a Lebesgue measurable subset of $[0,1]^n$ that does not contain two elements $\mathbf{x}=(x_1,\ldots,x_n)$ and $\mathbf{y}=(y_1,\ldots,y_n)$ such that $\mathbf{x} \le \mathbf{y}$ and 
 $\sum_{i=1}^n(y_i-x_i) \geq \kappa$. 
Then the $n$-dimensional Lebesgue measure of $A$ is not greater than the measure of the following optimal set:
\[ A_{\kappa} :=\left\{(x_1,\ldots,x_n) \in [0,1]^n: \frac{n-\kappa}{2}\leq \sum_{i=1}^n x_i < \frac{n+\kappa}{2}\right\}. \] 
Moreover, if we set  
$v_n(\kappa):= 1 - \frac{2}{n!} \sum_{j=0}^{\lfloor \frac{n-\kappa}{2} \rfloor} (-1)^j \binom{n}{j} \left( \frac{n-\kappa}{2} -j \right)^n$,
where $\lfloor x\rfloor$ denotes the largest integer that is less than or equal to $x$, then we have $\Le^n(A_{\kappa}) = v_n(\kappa)$.  
\end{theorem}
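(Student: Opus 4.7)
The plan is to prove the stated inequality via a continuous Bollob\'as--Lubell--Yamamoto--Meshalkin-style averaging over monotone paths in the cube. Write $\phi(\mathbf{x}) := \sum_{i=1}^n x_i$ and $H_t := \phi^{-1}(t) \cap [0,1]^n$. The key observation is a ``chain-length'' reformulation of the hypothesis: for any coordinate-wise non-decreasing path $\gamma \colon [0, n] \to [0,1]^n$ parameterized by coordinate sum (so that $\phi(\gamma(t)) = t$), the set $\{t \in [0,n] : \gamma(t) \in A\}$ is contained in some interval $I_\gamma$ of length at most $\kappa$. Indeed, if $\gamma(t_1), \gamma(t_2) \in A$ with $t_1 < t_2$, then $\gamma(t_1) \le \gamma(t_2)$ componentwise and $t_2 - t_1 = \sum_i(\gamma_i(t_2) - \gamma_i(t_1)) < \kappa$ by assumption.

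Let $F(t) := \Ha^{n-1}(H_t)/\sqrt{n}$ be the Irwin--Hall density, i.e.\ the density of $\phi(X)$ for $X$ uniform on $[0,1]^n$; note that $\int_0^n F = 1$ and that $\Le^n(A_\kappa) = \int_{(n-\kappa)/2}^{(n+\kappa)/2} F(t)\,dt$. The main step is to construct a probability measure $\mathbb{P}$ on monotone paths $\gamma$ as above whose one-time marginals $\gamma(t)$ are uniform on $H_t$ (with respect to $\Ha^{n-1}$). Granted this, the coarea formula together with Fubini yields the identity
\[
\Le^n(A) \;=\; \mathbb{E}_\gamma \!\int_0^n \mathbf{1}_A(\gamma(t)) \, F(t) \, dt,
\]
and the chain-length observation bounds the right-hand side by $\sup_{|I|=\kappa} \int_I F(t)\,dt$. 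Since $F$ is symmetric about $n/2$ and log-concave (being the $n$-fold convolution of $\mathbf{1}_{[0,1]}$), this supremum is attained at the centred interval $I = [(n-\kappa)/2, (n+\kappa)/2]$ by an elementary differentiation argument, giving $\Le^n(A) \le \Le^n(A_\kappa)$.

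The explicit form of $v_n(\kappa)$ then follows by integrating the standard piecewise-polynomial expression for the Irwin--Hall density, i.e.\ by inclusion--exclusion on truncated simplices. The chief obstacle in the scheme above is the construction of the probability measure $\mathbb{P}$: one must exhibit a family of monotone paths whose one-time marginals are uniform on every cross-section of the cube, and verify this uniformity together with the Fubini exchange rigorously. A clean workaround is to discretize on the grid $\{0, 1/N, \ldots, 1\}^n$, apply the combinatorial LYM-type bound over uniformly random maximal chains in the product poset (where the required averaging identity is transparent), and pass to the limit $N \to \infty$; the chain-length hypothesis becomes a forbidden-pattern condition on chains at each finite scale, and the resulting bound rearranges to $v_n(\kappa) + o(1)$.
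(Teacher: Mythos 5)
The paper does not prove this statement: it is Engel's theorem, quoted from \cite{engel} and used as a black box in deriving Theorem~\ref{cont_k_sperner}. The only internal comparison available is with the paper's proof of the analogous Theorem~\ref{cont_k_sperner}, which likewise discretises the cube and invokes the $k$-Sperner theorem for the product poset $[m-1]_0^n$ via the normalized-matching / symmetric-chain machinery cited from Engel's \emph{Sperner Theory} (Thm.~5.1.4, Ex.~5.1.1).

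Measured against that, your discretisation fallback has two genuine gaps. First, the averaging identity you call ``transparent'' fails: in $\{0,\dots,N\}^n$ a uniformly random maximal chain does \emph{not} hit the elements of a fixed rank with equal probability. Already in $\{0,1,2\}^2$, a uniformly random monotone lattice path from $(0,0)$ to $(2,2)$ passes through $(1,1)$ with probability $2/3$ but through $(2,0)$ with probability $1/6$, although both lie at rank~$2$. Hence bounding $\mathbb{E}\,|C\cap D|$ over uniformly random maximal chains does not bound $|D|$ by a sum of the largest Whitney numbers, which is what you need. The actual ingredient is the normalized matching property (equivalently, a symmetric chain decomposition) of $\{0,\dots,N\}^n$, a nontrivial structural fact; and this is the same obstruction, in disguise, as the missing path measure with uniform slice marginals in your continuous scheme, since such a measure is a continuous analogue of a normalized flow. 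Second, transporting the hypothesis on $A$ to a grid set is not automatic: if $\dd_1\le\dd_2$ and the cells $Q_{\dd_1},Q_{\dd_2}$ merely meet $A$, the coordinates where $\dd_1$ and $\dd_2$ agree may be reversed for the two witnessing points, so you do not directly get a forbidden pair in $A$. You need a density argument (e.g.\ keep only cells with $\Le^n(Q_{\dd}\cap A)>(1-2^{-n})m^{-n}$, so both the lower-left and upper-right half-cubes meet $A$), plus compactness/inner regularity to control the mass discarded; this is precisely the role of $D_A$, $D_\eps$ and Lemma~\ref{D_e_chain} in the paper's argument for Theorem~\ref{cont_k_sperner}.

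With those two ingredients supplied, your discretisation route does close and is in essence Engel's original argument. The direct continuous BLYM scheme, while attractive, would first require constructing the monotone-path measure with uniform slice marginals (a Klain--Rota-style continuous symmetric chain structure), which is not cheaper than the discrete normalized-matching input you would be trying to avoid.
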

 
Notice that the measure of set $A_{\kappa}$, in Theorem~\ref{th:01}, depends continuously on $\kappa$ and therefore $v_n(\kappa)$ is a continuous function of $\kappa$. 
  
Before stating our main results, let us proceed with some remarks. 
Notice that one can associate a binary vector $\mathbf{x}\in \{0,1\}^n$ to each subset $F$ of $[n]$: simply put $1$ in the $i$-th coordinate if $i\in F$, and $0$ otherwise. Notice that this correspondence is bijective and one may choose to not distinguish between subsets of $[n]$ and binary vectors of length $n$. 
Hence, another way to think of chains in $2^{[n]}$ is to consider subsets $C\subset \{0,1\}^n$ such that 
for every distinct $\mathbf{x}=(x_1,\ldots,x_n)$ and $\mathbf{y}=(y_1,\ldots,y_n)$ in  $C$ we either have  
$\mathbf{x}\le \mathbf{y}$, or $\mathbf{y}\le \mathbf{x}$. Clearly, the maximum size of a chain, which does not contain the empty set, is at most $n$. 
Given the aforementioned observations, Theorem~\ref{k_sperner} can be equivalently expressed as follows. 

\bigskip

\begin{theorem}[Theorem~\ref{k_sperner} restated]
\label{k_sperner2}
Fix a positive integer $k\in [n]$. 
Let $A\subset \{0,1\}^n$ be such that 
\[
|A\cap C|\le k, \; \text{ for all chains } \; C\subset \{0,1\}^n \, .
\]
Then $|A|\le \sum_{i=1}^{k} \binom{n}{\lfloor \frac{n-k}{2}\rfloor +i}$. 
\end{theorem}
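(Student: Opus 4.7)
The plan is to derive Theorem~\ref{k_sperner2} from the original Erd\H{o}s theorem (Theorem~\ref{k_sperner}) by invoking the canonical bijection between $2^{[n]}$ and $\{0,1\}^n$ given by indicator vectors. I would first set up the map $\Phi : 2^{[n]} \to \{0,1\}^n$ sending $F$ to $(\mathbf{1}_{i \in F})_{i=1}^n$, and observe that $\Phi$ is a lattice isomorphism: set inclusion $F_1 \subseteq F_2$ corresponds precisely to the componentwise order $\Phi(F_1) \le \Phi(F_2)$, so chains in $(2^{[n]}, \subseteq)$ biject with chains in $(\{0,1\}^n, \le)$.

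Given $A \subset \{0,1\}^n$ satisfying $|A \cap C| \le k$ for every chain $C \subset \{0,1\}^n$, I would set $\F := \Phi^{-1}(A)$ and transport the hypothesis to the statement that $|\F \cap \C| \le k$ for every chain $\C \subset 2^{[n]}$. In other words, $\F$ is a $k$-Sperner family in the sense of Theorem~\ref{k_sperner}, which then yields that $|A| = |\F|$ is at most the sum of the $k$ largest binomial coefficients $\binom{n}{i}$.

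It remains to verify that these $k$ largest coefficients are exactly $\binom{n}{\lfloor (n-k)/2 \rfloor + 1}, \ldots, \binom{n}{\lfloor (n-k)/2 \rfloor + k}$. Since $i \mapsto \binom{n}{i}$ is unimodal with peak near $n/2$ and symmetric under $i \leftrightarrow n-i$, the $k$ largest coefficients are indexed by $k$ consecutive integers positioned as symmetrically as possible around $n/2$. A brief case analysis on the parity of $n-k$ confirms that the range $\{\lfloor (n-k)/2 \rfloor + 1, \ldots, \lfloor (n-k)/2 \rfloor + k\}$ fits the bill: when $n-k$ is odd the range is exactly symmetric about $n/2$, and when $n-k$ is even it is one of two equally optimal choices whose binomial sums coincide by the identity $\binom{n}{j} = \binom{n}{n-j}$. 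This index identification is the only bookkeeping required and constitutes the main (and quite mild) obstacle; all the genuine mathematical content is absorbed in Theorem~\ref{k_sperner}.
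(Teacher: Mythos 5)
Your proof is correct and matches the paper's approach: the paper, too, derives Theorem~\ref{k_sperner2} from Theorem~\ref{k_sperner} via the indicator-vector bijection $2^{[n]} \leftrightarrow \{0,1\}^n$, observing that it is an order isomorphism carrying set inclusion to the coordinatewise order, and simply presents the two statements as equivalent under this identification. The only difference is that you also spell out the (routine) verification that the $k$ largest binomial coefficients are $\binom{n}{\lfloor (n-k)/2\rfloor + 1},\ldots,\binom{n}{\lfloor (n-k)/2\rfloor + k}$, which the paper leaves implicit.
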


It seems natural to ask what happens if one replaces the binary $n$-cube $\{0,1\}^n$ with the unit $n$-cube $[0,1]^n$ in Theorem~\ref{k_sperner2}. 
Bearing this in mind, we proceed with the following. 

\bigskip

\begin{definition}[Chains] 
A \emph{chain} is a set $C\subset \R^n$ such that  for every distinct  
$\mathbf{x},\mathbf{y}\in C$ we either have  
$\mathbf{x}\le \mathbf{y}$, or $\mathbf{y}\le \mathbf{x}$.   
\end{definition}

An example of a chain in the unit $n$-cube is the set 
\[
C = \{ ( f_1(x),\ldots, f_n(x) ): x\in [0,1] \} \, , 
\]
where, for $i\in [n]$, $f_i: [0,1]\to [0,1]$ is a non-decreasing function.  

What is the maximum ``size'' of a chain in the unit $n$-cube? Since we are dealing with subsets of the unit $n$-cube we have to choose a suitable notion of ``size''. A first choice could be the $n$-dimensional Lebesgue measure.
However, it is not difficult to see, using Lebesgue's density theorem, that the Lebesgue measure of a chain in the unit $n$-cube equals zero. Given this observation, it is then natural to ask for sharp upper bounds on the Hausdorff dimension and the corresponding Hausdorff measure of chains in the unit $n$-cube. Our first result provides best possible bounds on both quantities.  
Throughout the text,  given $s\in [0,\infty)$, $\Ha^s(\cdot)$ denotes $s$-dimensional Hausdorff outer measure (see~\cite[p.~81 and p.~1--2]{Evans_Gariepy}). 

\bigskip

\begin{theorem}\label{chains}
Let $C\subset [0,1]^n$ be a chain. Then $\Ha^1(C)\le n$. 
\end{theorem}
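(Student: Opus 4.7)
The plan is to project the chain onto the sum of its coordinates and exploit the fact that on a chain this projection is \emph{expansive} (in the sense that its inverse is Lipschitz), so it cannot increase $1$-dimensional Hausdorff measure.

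Concretely, define the map $\sigma\colon [0,1]^n\to [0,n]$ by $\sigma(\x)=\sum_{i=1}^n x_i$. I would first observe that the restriction $\sigma|_C$ is injective: if $\x,\y\in C$ and $\sigma(\x)=\sigma(\y)$, then by the chain property we may assume $\x\le \y$, whence $\sum_i(y_i-x_i)=0$ with each summand non-negative, forcing $\x=\y$. Thus the inverse $\varphi:=(\sigma|_C)^{-1}\colon \sigma(C)\to C$ is well defined.

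Next I would show that $\varphi$ is $1$-Lipschitz with respect to the Euclidean metric on $C$ and the usual metric on $\sigma(C)\subset\R$. For any $\x,\y\in C$, the chain property lets us assume $\x\le\y$, so $|y_i-x_i|=y_i-x_i$ and
\[
\|\y-\x\|_2 \;=\; \Bigl(\sum_{i=1}^n(y_i-x_i)^2\Bigr)^{1/2} \;\le\; \sum_{i=1}^n(y_i-x_i) \;=\; |\sigma(\y)-\sigma(\x)|,
\]
where the middle inequality uses $\bigl(\sum a_i\bigr)^2\ge\sum a_i^2$ for non-negative $a_i$. This is exactly the Lipschitz bound for $\varphi$ with constant~$1$.

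Finally, since $1$-Lipschitz maps do not increase $\Ha^1$, we get
\[
\Ha^1(C)\;=\;\Ha^1\bigl(\varphi(\sigma(C))\bigr)\;\le\;\Ha^1(\sigma(C))\;\le\;\Ha^1([0,n])\;=\;n,
\]
which is the desired bound. No serious obstacle arises here; the only point requiring a little care is verifying that the elementary inequality $\|\y-\x\|_2\le\|\y-\x\|_1$ is tight enough (it becomes the key step precisely because comparability forces $|y_i-x_i|=y_i-x_i$, turning the $\ell^1$ norm into a telescoping quantity controlled by $\sigma$). Sharpness of the bound, although not asked for in the statement, is immediately witnessed by the diagonal chain $\{(t,t,\dots,t):t\in[0,1]\}$, whose $\Ha^1$ measure is exactly $\sqrt{n}\cdot 1\le n$, or more tightly by any monotone path from $\0$ to $\1$ lying on the coordinate skeleton.
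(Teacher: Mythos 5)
Your proof is correct and follows the same underlying idea as the paper: on a chain the Euclidean distance between two comparable points is dominated by the difference of their coordinate sums, so the coordinate-sum map has a $1$-Lipschitz inverse, and Lipschitz maps do not increase $\Ha^1$. The paper's version of this argument is slightly more elaborate: it slices $C$ into the $n$ slabs $C^{(i)} = C \cap \{i-1\le\sum x_j\le i\}$, projects each slab onto a unit segment on the $i$-th coordinate axis via $\vartheta_i$, bounds each $\Ha^1(C^{(i)})$ by $1$, and sums. You skip the slicing and project $C$ directly onto $[0,n]$ via $\sigma(\x)=\sum_i x_i$, which collapses the bookkeeping; the one-line estimate $\|\y-\x\|_2\le\sum_i(y_i-x_i)=\sigma(\y)-\sigma(\x)$ does all the work. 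In fact the paper itself uses exactly your streamlined version later, in the proof of Theorem~\ref{conj_s}, where it introduces $\theta(x_1,\ldots,x_n)=\sum_i x_i$ and says the inverse is $1$-Lipschitz ``using a similar argument as in the proof of Theorem~\ref{chains}.'' So your write-up is not only correct but arguably the cleaner presentation of the same argument.
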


The bound provided by Theorem~\ref{chains} is best possible, as can be seen from the chain 
\[
C = \bigcup_{i=1}^{n}\{(x_1,\ldots,x_n)\in [0,1]^n  : x_1 = \cdots = x_{i-1} =1 , \, x_{i+1} = \cdots =x_n=0\} \, .
\]
A more ``exotic'' example of a chain in the unit $n$-cube whose $1$-dimensional Hausdorff measure equals $n$ can be found in the proof of~\cite[Theorem~1.5]{Dolezal_Mitsis_Pelekis}.  
Now, given Theorem~\ref{chains} and Theorem~\ref{k_sperner2}, it seems natural to ask for upper bounds on the maximum ``size'' of a subset of the unit $n$-cube whose intersection with every chain has $\Ha^1$-measure which is not larger than a given number from the interval $(0,n]$. This leads to the following continuous analogue of Erd\H{o}s' theorem. Throughout the text, the term \emph{measurable set} refers to a set that is Lebesgue measurable. 

\bigskip

\begin{theorem}\label{cont_k_sperner}
Fix a real number $\kappa\in (0,n]$. Let $A\subset [0,1]^n$ be a measurable set that satisfies 
\[
\Ha^1(A\cap C)\le \kappa, \; \text{ for all chains } \; C\subset [0,1]^n \, .
\] 
Then the $n$-dimensional Lebesgue measure of $A$ is not greater than the measure of the following optimal set: 
\[ A_{\kappa}^{\ast} :=\left\{(x_1,\ldots,x_n) \in [0,1]^n: \frac{n-\kappa}{2}\leq \sum_{i=1}^n x_i \le \frac{n+\kappa}{2}\right\} \, . \] 
Moreover, we have 
$\Le^n(A_\kappa^{\ast})= v_n(\kappa)$, where $v_n(\kappa)$ is as in Theorem~\ref{th:01}.
\end{theorem}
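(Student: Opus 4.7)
My plan is to reduce the $n$-dimensional inequality to a one-dimensional integral inequality via the coarea formula, and then to exploit the chain hypothesis through a carefully designed family of axis-aligned monotone (``staircase'') chains.

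Set $t(\x):=\sum_i x_i$ and $H_s:=\{t=s\}$, and write $a(s):=\Ha^{n-1}(A\cap H_s)$, $c(s):=\Ha^{n-1}([0,1]^n\cap H_s)$. Since $|\nabla t|=\sqrt n$, the coarea formula yields $\sqrt n\,\Le^n(A)=\int_0^n a(s)\,ds$, and a direct computation gives $\sqrt n\,\Le^n(A_\kappa^{\ast})=\int_{(n-\kappa)/2}^{(n+\kappa)/2} c(s)\,ds$; the same integration that appears behind the formula in Theorem~\ref{th:01} shows that this last quantity equals $\sqrt n\,v_n(\kappa)$. So the theorem reduces to
\[
\int_0^n a(s)\,ds\le\int_{(n-\kappa)/2}^{(n+\kappa)/2} c(s)\,ds.
\]
The central local observation is that along any axis-aligned monotone polygonal chain $C$ the Euclidean arc-length element agrees with $dt$, so the chain hypothesis $\Ha^1(A\cap C)\le\kappa$ is equivalent to $\Le^1(t(A\cap C))\le\kappa$ for every such staircase chain.

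The core step is to integrate this local constraint against an appropriate probability measure $\mathbb{P}$ on axis-aligned staircase chains in order to recover the cross-sectional profile $a(s)$ on the left-hand side. I would parametrise chains by a permutation $\sigma\in S_n$ of coordinates together with a vector $\boldsymbol w$ of ``turning heights'', and choose $\mathbb{P}$ to be invariant under the $S_n$-action on coordinates and such that the ``chain-at-level-$s$'' density is proportional to $c(s)$. Integrating the chain hypothesis $\Le^1(t(A\cap C_{\sigma,\boldsymbol w}))\le\kappa$ against $\mathbb{P}$ and applying Fubini should then yield, on the left, a multiple of $\int_0^n a(s)\,ds$ and, on the right, a multiple of $\int_{(n-\kappa)/2}^{(n+\kappa)/2} c(s)\,ds$; the two multiples will match because the extremal set $A_\kappa^{\ast}$ saturates the hypothesis on every chain of the family (indeed, any axis-aligned staircase from $\0$ to $\1$ meets the slab in a segment of $\Ha^1$-length exactly $\kappa$).

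The main obstacle is the construction of $\mathbb{P}$. A single foliation, such as the family of diagonal lines parallel to $\1$, only yields the much weaker bound $\Le^n(A)\le\kappa\cdot\Ha^{n-1}(\pi_{H_0}[0,1]^n)$, which exceeds $v_n(\kappa)$ for $n$ large. The correct construction must average symmetrically over all $n!$ coordinate orderings of the staircase and simultaneously over the interior turning heights, with a weight engineered so that the push-forward of $\mathbb{P}$ under $t$ reproduces the cross-sectional area function $c(s)$. I expect this combinatorial averaging, together with the verification that the resulting Fubini factor coincides with the one implicitly computed for $A_\kappa^{\ast}$, to be the most technical part of the proof.
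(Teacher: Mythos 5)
Your outline takes a genuinely different route from the paper (the paper discretises $[0,1]^n$ into $m^n$ small cubes, shows that the nearly-full cubes form a set with no long chains, and then cites Erd\H{o}s' discrete $k$-Sperner theorem together with a Whitney-number limit), and the averaging idea is the right moral engine, but as stated your reduction has an accounting error and a large unresolved construction.

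First, the accounting. You want $\mathbb{E}_{\mathbb P}\bigl[\Le^1(t(A\cap C))\bigr]$ to come out as a constant multiple of $\int_0^n a(s)\,ds$. But $\mathbb{E}_{\mathbb P}\bigl[\Le^1(t(A\cap C))\bigr]=\int_0^n \mathbb P(\gamma_C(s)\in A)\,ds$, and since $\gamma_C(s)$ is a single random point on the slice $H_s\cap[0,1]^n$, the marginal law $\nu_s$ is a \emph{probability} measure on that slice. There is no probability measure on maximal monotone curves whose level-$s$ marginal is an un-normalised copy of $\Ha^{n-1}\!\restriction_{H_s}$ (its mass $c(s)$ would have to be independent of $s$, which it isn't). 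The best you can hope for is the \emph{normalised} marginal $\nu_s=\Ha^{n-1}\!\restriction_{H_s}/c(s)$, which yields the LYM-type inequality $\int_0^n a(s)/c(s)\,ds\le\kappa$, \emph{not} a multiple of $\int a$. That LYM inequality, together with $0\le a\le c$ and the fact that $c$ is symmetric about $n/2$ and unimodal, \emph{does} imply $\int_0^n a\,ds\le\int_{(n-\kappa)/2}^{(n+\kappa)/2} c\,ds$ by a bang-bang/Lagrangian argument, but that extra optimisation step is missing from your outline. Your closing remark that ``the two multiples will match because $A_\kappa^*$ saturates the hypothesis'' is not the correct justification; one really has to run the optimisation.

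Second, the measure construction is a substantive gap, not just a ``technical'' one: you must construct a probability measure on \emph{axis-aligned} monotone staircases from $\0$ to $\1$ whose level-$s$ marginal is uniform on $H_s\cap[0,1]^n$ for every $s$. Axis-alignment is essential here: for a non-axis-aligned monotone curve $\gamma$ with $t(\gamma(s))=s$ one has $|\gamma'(s)|\le1$ with equality only when $\gamma'$ is a coordinate vector, so $\Ha^1(A\cap\gamma)\le\Le^1\{s:\gamma(s)\in A\}$ with strict inequality in general, and the chain hypothesis gives weaker control than you need. For $n=2$ the desired measure can be built via a P\'olya-urn scheme (the partial sums of a P\'olya sequence are exactly uniform on $\{0,\dots,k\}$); for general $n$ this is the continuous analogue of the normalised matching property of the grid poset $[m-1]_0^n$. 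It is plausible and likely true, but it is not standard and is precisely the content that the paper avoids by discretising and invoking the known combinatorial theorem. In effect the paper's approach outsources both difficulties (the chain-averaging and the optimisation) to the discrete $k$-Sperner theorem, at the cost of a rather delicate argument (Lemma~\ref{D_e_chain} and Lemma~\ref{Cubes}) showing that long chains of nearly-full small cubes force long chains inside $A$; your approach would avoid that geometric construction but must first supply both the staircase measure and the LYM-to-Sperner optimisation.
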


\subsection{Organisation}

In Section~\ref{sec:chains} we prove Theorem~\ref{chains} by showing that the $\Ha^1$-measure of $A$ is less than or equal to the sum of the  $\Ha^1$-measures of its $n$ ``anti-diagonal'' projections onto the $n$ coordinate axes. 
Sections~\ref{sec:k_sperner} and~\ref{vasek} are devoted to the proof of Theorem~\ref{cont_k_sperner}. The proof is based on, and is inspired from, the proof of Theorem~\ref{th:01} (see~\cite{engel}) and proceeds by discretising the problem and by employing well know results from the theory of (finite) partially ordered sets. 
Finally, in Section~\ref{conclusion} we collect some remarks and an open problem. 

\section{Proof of Theorem~\ref{chains}}\label{sec:chains}

Given a chain $C\subset [0,1]^n$ and $i\in [n]$, let $C^{(i)}$ denote the set 
\[
C^{(i)} = C \cap \left\{(x_1,\ldots,x_n)\in [0,1]^n : i-1\le \sum_{i=1}^n x_i \le i \right\} \, . 
\]
Moreover, given $\mathbf{x}\in C^{(i)}$, let $S_i(\mathbf x)= (\sum_{j=1}^{n}x_j) - (i-1)$. 
For each $i\in [n]$ consider the "anti-diagonal" projections $\vartheta_i : C^{(i)}\to [0,1]^n$  
defined by  
\[
(x_1,\ldots,x_n) \mapsto (0,\ldots,0, S_i(\mathbf x),0,\ldots,0)\, ,
\]
where $S_i(\mathbf x)$ is on the $i$-th coordinate. Notice that, for each $i\in [n]$, $\vartheta_i$ restricted on $C^{(i)}$ is injective and therefore is a bijection from $C^{(i)}$ onto its image $\vartheta_i(C^{(i)})$. Let $\mathbf{a},\mathbf{b}\in \vartheta_i(C^{(i)})$ be distinct and suppose that $\vartheta_i^{-1}(\mathbf{a}) = \mathbf{x}$ and $\vartheta_i^{-1}(\mathbf{b}) = \mathbf{y}$, for some $\mathbf{x},\mathbf{y}\in C^{(i)}$. 
Suppose, without loss of generality, that $S_i(\mathbf{x}) \ge S_i(\mathbf{y})$. 
Now notice that 
\[
\| \vartheta_i^{-1}(\mathbf{a})- \vartheta_i^{-1}(\mathbf{b}) \| = \sqrt{ \sum_{i=1}^{n} (x_i - y_i)^2 } \le \sum_{i=1}^{n} (x_i - y_i) = S_i(\mathbf{x}) -  S_i(\mathbf{y}) = \| \mathbf{a} - \mathbf{b}\| \, .
\]
This implies that, for each $i\in [n]$, the function $\vartheta_i^{-1}: \vartheta_i(C^{(i)})\to C^{(i)}$ is Lipschitz with constant $1$ and therefore (see~\cite[Theorem~2.8 ]{Evans_Gariepy}) we have   
\[
\Ha^1(C^{(i)}) = \Ha^1( \vartheta_i^{-1}(\vartheta_i(C^{(i)})) ) \le \Ha^1(\vartheta_i(C^{(i)}))  \, . 
\]
Hence 
\[ 
\Ha^1(C) \le \sum_{i=1}^{n} \Ha^1(\vartheta_i(C^{(i)})) 
\]
and, since we clearly have $\Ha^1(\vartheta_i(C^{(i)}))\le 1$,   the result follows.

\section{Proof of Theorem~\ref{cont_k_sperner}}\label{sec:k_sperner}

In this section we prove Theorem~\ref{cont_k_sperner}. 
The proof  requires some extra piece of notation. 
Throughout this section,  $[m-1]_0$ denotes the set of integers $\{0,1,\ldots,m-1\}$.
Given positive integers $j$ and $m\ge 2$ such that $j<m$, we denote by $I_{j,m}$ the intervals 
\[
I_{j,m}=
\begin{cases}
[\frac{j}{m},\frac{j+1}{m})\, , &\text{ if } j \in [m-2]_0,\\
[\frac{j}{m},\frac{j+1}{m}] \, , &\text{ if } j =m-1.
\end{cases}
\]
The approach we embark on is based on, and is inspired from, the approach in~\cite{engel}. In particular, we make use of the following result from~\cite[Lemma~2]{engel}.
 
\bigskip

\begin{lemma}[\cite{engel}]
\label{engel_lemma}
Let $V_{n,m}(\kappa)$ be the sum of the $\lceil \kappa m +n \rceil$ largest coefficients in the polynomial 
$p(x) = (1+ x + \cdots  + x^{m-1})^n$.
Then we have $\lim_{m\to\infty} \frac{V_{n,m}(\kappa)}{m^n} = v_n(\kappa)$, where $ v_n(\kappa)$ is defined in Theorem~\ref{th:01}. 
\end{lemma}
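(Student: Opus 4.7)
The idea is to interpret $V_{n,m}(\kappa)/m^n$ combinatorially as the proportion of lattice points in $[m-1]_0^n$ whose coordinate sum lies in a specified interval, to recognise this ratio as a Riemann sum for the $n$-dimensional Lebesgue measure of the slab $A^{\ast}_{\kappa}$ appearing in Theorem~\ref{cont_k_sperner}, and to pass to the limit; the Irwin--Hall cumulative distribution formula then identifies the limit with the closed-form expression $v_n(\kappa)$.

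Writing $c_k := c_{k,n,m}$ for the coefficient of $x^k$ in $p(x)$, the identity $p(x) = \sum_{\ba \in [m-1]_0^n} x^{a_1 + \cdots + a_n}$ yields $c_k = \#\{\ba \in [m-1]_0^n : a_1 + \cdots + a_n = k\}$. The sequence $(c_k)_{k=0}^{n(m-1)}$ is palindromic---since $p(x) = x^{n(m-1)} p(1/x)$---and unimodal, as the $n$-fold convolution of the constant log-concave sequence $(1,1,\ldots,1)$ of length $m$ (log-concavity, and hence unimodality, being preserved by convolution of sequences without internal zeros). Its peak therefore lies at the centre $n(m-1)/2$, so the indices of the $T_m := \lceil \kappa m + n \rceil$ largest coefficients fill an interval $K_m \subset \{0,\ldots,n(m-1)\}$ of length $T_m$ that is symmetric about $n(m-1)/2$ up to a possible one-step shift coming from parity. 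Since $T_m/m \to \kappa$ and $n(m-1)/(2m) \to n/2$, the rescaled set $K_m/m$ satisfies $K_m/m = \{k/m : k/m \in J_m\}$ for a continuous interval $J_m$ converging in Hausdorff distance to $[(n-\kappa)/2,(n+\kappa)/2]$.

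Now
$$\frac{V_{n,m}(\kappa)}{m^n} = \frac{1}{m^n}\,\#\Bigl\{\ba \in [m-1]_0^n : \tfrac{a_1 + \cdots + a_n}{m} \in \tfrac{K_m}{m}\Bigr\}.$$
Tiling $[0,1)^n$ by the half-open cubes $Q_{\ba} := \prod_{i=1}^n [a_i/m, (a_i+1)/m)$ of measure $m^{-n}$, the right-hand side equals $\Le^n$ of the union of those $Q_{\ba}$ whose base-corner sum $(a_1 + \cdots + a_n)/m$ lies in $K_m/m$. On each $Q_{\ba}$ the function $\x \mapsto \sum_i x_i$ varies by at most $n/m$, so every cube whose closure lies in the interior of $A^{\ast}_{\kappa}$ is fully counted and every cube whose closure misses $A^{\ast}_{\kappa}$ is fully omitted; the remaining ``boundary'' cubes all lie within $O(1/m)$-thick neighbourhoods of the two bounding hyperplanes of $A^{\ast}_{\kappa}$, whose total Lebesgue measure in $[0,1]^n$ is $O(1/m)$. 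Hence $V_{n,m}(\kappa)/m^n \to \Le^n(A^{\ast}_{\kappa})$ as $m\to\infty$. Finally, a Fubini computation---equivalently, the classical Irwin--Hall identity $F_n(s) = \tfrac{1}{n!}\sum_{j=0}^{\lfloor s \rfloor}(-1)^j\binom{n}{j}(s-j)^n$ together with the symmetry $F_n(n-s) = 1 - F_n(s)$---gives $\Le^n(A^{\ast}_{\kappa}) = 1 - 2F_n\bigl((n-\kappa)/2\bigr) = v_n(\kappa)$, which is the desired limit.

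The main obstacle I anticipate is the second paragraph: making the description of $K_m$ precise enough to absorb the $O(1)$ discrepancies coming from the ceiling in $T_m$, the parity of $n(m-1)$, and possible ties among the coefficients immediately outside the peak. Fortunately these effects perturb $K_m/m$ only by $O(1/m)$ and are swamped by the same boundary error already handled in the third paragraph, so the argument closes cleanly once the unimodality and palindromic symmetry of $(c_k)$ are in hand and once one notes that $\partial A^{\ast}_\kappa$ has zero $n$-dimensional Lebesgue measure.
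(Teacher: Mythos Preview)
The paper does not prove this lemma at all: it is quoted verbatim as \cite[Lemma~2]{engel} and used as a black box, so there is no in-paper argument to compare against. Your proof is self-contained and correct. The combinatorial identification of $c_k$ with the level-set count in $[m-1]_0^n$, the unimodality/palindromic argument locating the $\lceil \kappa m+n\rceil$ largest coefficients in a central block, the Riemann-sum passage to $\Le^n(A^{\ast}_\kappa)$ with $O(1/m)$ boundary error, and the Irwin--Hall identification with $v_n(\kappa)$ are all sound, and you have correctly anticipated that the ceiling, parity, and tie ambiguities only shift $K_m/m$ by $O(1/m)$ and are absorbed into the boundary estimate. If anything, your write-up supplies strictly more than the paper does here.
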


The sum of the $k$ largest coefficients in the polynomial 
$p(x) = (1+ x + \cdots  + x^{m-1})^n$ are also referred to as the $k$ \emph{largest Whitney numbers} of $[m-1]_{0}^{n}$ (see \cite[p.~25]{Greene_Kleitman}). 

We now proceed with the proof of Theorem~\ref{cont_k_sperner}. 
Let $A\subset [0,1]^n$ be a measurable set that satisfies $\Ha^1(A\cap C)\le \kappa$, for all chains $C\subset [0,1]^n$. Notice that Theorem~\ref{th:01} implies that it is enough to show 
\begin{equation}\label{eq_0}
\Le^n(A)\le v_n(\kappa) \, , 
\end{equation}
where $v_n(\kappa)$ is as in Theorem~\ref{th:01}. Moreover, the inner regularity of Lebesgue measure implies that it is enough to assume that $A$ is 
\emph{compact}. 

If $\kappa =n$, then Theorem~\ref{chains} implies that the unit $n$-cube has maximum $\Le^n$-measure. We may therefore assume that $\kappa < n$. 

Fix $\eps < \frac{1}{2n+2}$ which is additionally assumed to be sufficiently small so that it satisfies  
\begin{equation}\label{eps_condition}
\kappa < n(1-(2n+2)\eps)\cdot (1-\eps)^n \, .
\end{equation}
Write the unit $n$-cube $[0,1]^n$ as a union 
of cubes of the form
\[
Q_{\mathbf{d}}:=I_{d_1,m} \times I_{d_2,m} \times \dots \times I_{d_n,m},
\]
where  $\mathbf{d}=(d_1,\ldots,d_n)\in [m-1]_0^n$. Notice that each cube $Q_{\mathbf{d}}$ 
can be uniquely identified by the vector $\mathbf{d}\in [m-1]_0^n$. 
Given $F\subset [m-1]_0^n$, we denote  
\[
Q_F := \bigcup_{\mathbf{d}\in F} Q_{\mathbf{d}} \, .
\]
Consider the set of $n$-tuples 
\[
D_A = \{\mathbf{d}\in [m-1]_0^n : Q_{\mathbf{d}} \cap A \neq \emptyset \}\, .
\] 
Notice that $A\subset Q_{D_A}$. Moreover,  since $A$ is compact, 
we may assume that $m$ is large enough so that it holds 
\begin{equation}\label{equa:1}
\mathcal{L}^n(Q_{D_A}\setminus A) <\eps^{2n+1}  .
\end{equation}
Now consider the set  
\[
D_{\eps} = \{\mathbf{d}\in D_A :  \mathcal{L}^n(Q_{\mathbf{d}} \cap A ) > (1 - 2^{-n} \eps^{2n})\cdot \mathcal{L}^n(Q_{\mathbf d})  \} \, .
\]

The next lemma provides an upper bound on the maximum size of a chain in $D_{\eps}$. 

\bigskip

\begin{lemma}\label{D_e_chain} 
Let $t$ be the maximum size of a chain in $D_{\eps}$. Then $t\le \lceil m\kappa' + n\rceil$, where 
$\kappa' =\frac{\kappa}{(1-(2n+2)\eps) \cdot (1-\eps)^n}$. 
\end{lemma}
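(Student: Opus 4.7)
The plan is to produce a chain $C\subset[0,1]^n$ such that
\[
\Ha^1(A\cap C)\;\ge\;\frac{(t-n)\bigl(1-(2n+2)\eps\bigr)(1-\eps)^n}{m}.
\]
Combined with the hypothesis $\Ha^1(A\cap C)\le\kappa$, this yields $(t-n)/m\le\kappa'$, and since $t$ is an integer we conclude $t\le\lceil m\kappa'+n\rceil$.

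To build $C$, first extend $\mathbf{d}^{(1)}<\cdots<\mathbf{d}^{(t)}$ to a saturated chain $\mathbf{d}^{(1)}=\mathbf{f}^{(0)}<\mathbf{f}^{(1)}<\cdots<\mathbf{f}^{(s)}=\mathbf{d}^{(t)}$ in the poset $[m-1]_0^n$, where $s=\sum_i(d^{(t)}_i-d^{(1)}_i)\ge t-1$ and each $\mathbf{f}^{(k)}-\mathbf{f}^{(k-1)}$ is a unit coordinate vector. Appending $n$ further unit steps inside $Q_{\mathbf{d}^{(t)}}$, one per coordinate direction, yields a staircase in $[0,1]^n$ from $\mathbf{d}^{(1)}/m$ to $(\mathbf{d}^{(t)}+\mathbf{1})/m$ of $\Ha^1$-measure $(s+n)/m$. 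For each shift $\mathbf{a}$ in a small box around the origin (of side $\le\eps/m$, so that all the translated staircases remain in $[0,1]^n$), let $C_\mathbf{a}$ denote this staircase translated by $\mathbf{a}$; each $C_\mathbf{a}$ is a chain with $\Ha^1(C_\mathbf{a})=(s+n)/m$.

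Applying Fubini to each of the $s+n$ axis-parallel pieces of $C_\mathbf{a}$, the defining bound $\Le^n(Q_{\mathbf{d}}\setminus A)<2^{-n}\eps^{2n}/m^n$ for $\mathbf{d}\in D_\eps$ shows that the average over $\mathbf{a}$ of the contribution to $\Ha^1(A^c\cap C_\mathbf{a})$ of any piece whose two adjacent cubes both lie in $D_\eps$ is at most $2^{1-n}\eps^{2n}/m$. Pieces adjacent to a cube outside $D_\eps$ are bounded trivially by their $\Ha^1$-length inside that cube; since at most $s+1-t$ cubes of the saturated chain lie outside $D_\eps$, their total contribution is at most $(s+1-t)/m$. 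Consequently, for some admissible $\mathbf{a}$,
\[
\Ha^1(A\cap C_\mathbf{a})\;\ge\;\frac{s+n}{m}-\frac{s+1-t}{m}-\frac{2(s+n)\eps^{2n}}{m}\;=\;\frac{t+n-1}{m}-\frac{2(s+n)\eps^{2n}}{m},
\]
and the constraints $\eps<1/(2n+2)$ and $\kappa<n(1-(2n+2)\eps)(1-\eps)^n$ imposed earlier suffice to make this last quantity at least $(t-n)(1-(2n+2)\eps)(1-\eps)^n/m$.

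The technical heart of the argument is the combined bookkeeping: one must keep $C_\mathbf{a}\subset[0,1]^n$ for every $\mathbf{a}$ in the parameter box (the source of the $(1-\eps)^n$ factor), correctly attribute each step's mass between the two cubes it traverses so that the density hypothesis on $D_\eps$ genuinely applies (the source of the $(1-(2n+2)\eps)$ factor), and absorb the loss from the $s+1-t$ non-$D_\eps$ cubes into the $+n$ slack on the right-hand side via the identity $s+n-(s+1-t)=t+n-1\ge t-n$.
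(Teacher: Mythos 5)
Your strategy — extend the given chain to a saturated one, build a staircase through it, and average $\Ha^1(A^c\cap C_\mathbf{a})$ over small translates $\mathbf{a}$ — is a genuinely different and more direct route than the paper's. The paper reduces Lemma~\ref{D_e_chain} to Theorem~\ref{thm_chain}, which rests on Lemma~\ref{Cubes}, an inductive decomposition in the dimension $n$: when the chain of cubes is flat in some coordinate directions it is projected, and otherwise it is split into maximal flat sub-chains joined by short paths found by the Fubini-type Lemma~\ref{one cube}. That machinery exists precisely to make the final bound depend only on the number $t$ of dense cubes and not on the total length $s$ of the saturated chain. Your direct argument does not automatically achieve this, and that is where a real gap opens up.

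Concretely, your final claim is that
\[
\frac{t+n-1}{m}-\frac{2(s+n)\eps^{2n}}{m}\ \ge\ \frac{(t-n)\bigl(1-(2n+2)\eps\bigr)(1-\eps)^n}{m}\,,
\]
but this fails: $s=\sum_i\bigl(d^{(t)}_i-d^{(1)}_i\bigr)$ can be as large as $n(m-1)$ while $t$ stays bounded, so the subtracted term is of order $n\eps^{2n}$ \emph{independently of $m$}, whereas the gap between the two sides is only $O\!\left(\frac{t+n}{m}\right)$. For fixed $t$ and $m\to\infty$ the left-hand side becomes negative and the inequality is false. The argument must not pay $O(\eps^{\cdot}/m)$ per piece of the staircase; it must pay only per \emph{dense} cube. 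The repair is available and cheap, but you did not make it: since $t$ is defined as the maximum chain length in $D_\eps$, and $\mathbf f^{(0)}<\cdots<\mathbf f^{(s)}$ is itself a chain in $[m-1]_0^n$, at most $t$ of the $\mathbf f^{(k)}$ lie in $D_\eps$, hence at most $2t$ pieces have both endpoints in dense cubes. With the count $2t$ in place of $s+n$, the error term is $O(t\eps^{n+1}/m)$, which is dominated by $\frac{t(1-c)}{m}$ where $c=(1-(2n+2)\eps)(1-\eps)^n$ (since $1-c\ge(2n+2)\eps$), and the inequality then holds for all $t$ and all sufficiently small $\eps$.

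Two smaller issues. First, the per-piece average: when you slide a segment of length $1/m$ over $\mathbf a\in[0,\eps/m]^n$ and apply Fubini, the integral $\int\Ha^1(A^c\cap(P+\mathbf a))\,d\mathbf a$ is bounded by $\tfrac{\eps}{m}\cdot\Le^n(A^c\cap R)$ with $R$ contained in two cubes; dividing by $(\eps/m)^n$ yields an average of order $\eps^{n+1}/m$, not $\eps^{2n}/m$. This mistake happens to be in the ``optimistic'' direction and does not affect the corrected argument, but your stated exponent is wrong. Second, you must truncate $C_\mathbf a$ to $[0,1]^n$ (the translated staircase exits the cube when some $d^{(t)}_i=m-1$), and the $n$ appended steps must be handled with slightly more care than advertised; these are routine. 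The bookkeeping claim that the total contribution of the bad cubes ``is at most $(s+1-t)/m$'' holds on average over $\mathbf a$ (the length of $C_\mathbf a$ inside a fixed cube averages exactly $1/m$), which is all you need, but it is not true pointwise, so the averaging needs to be applied to that term as well.

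In summary: the route is sound in spirit and simpler than the paper's inductive construction, but as written it contains a genuine gap in the final inequality that must be closed by observing that at most $t$ of the saturated-chain cubes are dense, together with a corrected exponent in the Fubini estimate.
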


The proof of Lemma~\ref{D_e_chain} is rather technical and is deferred to Section~\ref{vasek}.  
For the remaining part of this section, let us assume that Lemma~\ref{D_e_chain} holds true. 
Notice that~\eqref{eps_condition} guarantees that $\kappa' < n$. 

Now Lemma~\ref{D_e_chain} implies that $D_{\eps}$ is a partially ordered set 
that does not contain a chain of length 
$\lceil m\kappa' +n \rceil$ and therefore (see~\cite[Theorem 5.1.4 and Example 5.1.1]{Engel_Sperner}) 
it follows that $|D_{\eps}|$ is not larger than the sum of the $\lceil m\kappa' +n \rceil$ largest Whitney numbers of $[m-1]_{0}^{n}$, i.e., we have 
\begin{equation}\label{equa:3}
|D_{\eps}| \le V_{n,m}(\kappa') \, , 
\end{equation}
where $V_{n,m}(\kappa')$ is defined in Lemma~\ref{engel_lemma}. 

\textbf{Claim}: We have $\Le^n(A) \le \Le^n(Q_{D_{\eps}}) + \frac{1 -  2^{-n} \eps^{2n}}{2^{-n} } \cdot \eps$. 
\begin{proof}[Proof of Claim]
Notice that \eqref{equa:1} implies $\Le^n(Q_{D_A}) < \Le^n(A) + \eps^{2n+1}$. 
Moreover, the definition of $D_{\eps}$ implies $\Le^n(Q_{D_A\setminus D_{\eps}} \cap A ) \le (1-  2^{-n} \eps^{2n}) \cdot \Le^n(Q_{D_A\setminus D_{\eps}})$. Therefore, we have 
\begin{eqnarray*}
\Le^n(A) &=& \Le^n(A\cap Q_{D_{\eps}}) + \Le^n(A\cap Q_{D_A\setminus D_{\eps}}) \\ 
&\le& \Le^n(Q_{D_{\eps}}) + (1-  2^{-n} \eps^{2n}) \cdot (\Le^n(Q_{D_A}) - \Le^n(Q_{D_{\eps}}) ) \\
&\le&  \Le^n(Q_{D_{\eps}}) + (1-  2^{-n} \eps^{2n}) \cdot (\Le^n(A) + \eps^{2n+1} - \Le^n(Q_{D_{\eps}}) )
\end{eqnarray*}
which in turn implies 
\[
\Le^n(A)  \le \Le^n(Q_{D_{\eps}}) + \frac{ (1-  2^{-n} \eps^{2n})\cdot \eps^{2n+1}}{ 2^{-n} \eps^{2n}} \, ,
\]
as desired. 
\end{proof}

Set $\delta(\eps) :=  \frac{1 -  2^{-n} \eps^{2n}}{2^{-n} } \cdot \eps$. 
To finish the proof of Theorem~\ref{cont_k_sperner}, notice that the Claim and \eqref{equa:3}  imply  
\begin{eqnarray*}
\Le^n(A) &\le& \Le^n(Q_{D_{\eps}}) + \delta(\eps) = \frac{|D_{\eps}|}{m^n} + \delta(\eps) \le  \frac{V_{n,m}(\kappa')}{m^n} + \delta(\eps) 
\end{eqnarray*}
and therefore, using Lemma~\ref{engel_lemma}, we conclude  
\[ 
\Le^n(A) \le v_n(\kappa') + \delta(\eps) \, . 
\]
The continuity of $v_n(\cdot)$ implies~\eqref{eq_0}, upon letting $\eps\to 0$, and  thus Theorem~\ref{cont_k_sperner} follows.

\section{Proof of Lemma~\ref{D_e_chain}}\label{vasek}

This section is devoted to the proof of Lemma~\ref{D_e_chain}. We begin by introducing some additional  piece of notation. 

We denote by $\1_n=(1,\ldots,1)$ the point in $\R^n$ all of whose $n$ coordinates are equal to $1$, and we occasionally drop the index when the underlying dimension is clear from the context.   
Given a positive integer $k\in [n]$, we denote by $\binom{[n]}{k}$ the family consisting of all subsets of $[n]$ whose cardinality equals $k$ and, 
given $F =\{i_1<\cdots <i_k\}\in \binom{[n]}{k}$, we let $\pi_F(\cdot)$ denote the function $\pi_F:\mathbb{R}^n \to\mathbb{R}^k$ which maps every point  $(x_1,\ldots,x_n)$ to the point $(x_{i_1},\ldots,x_{i_k})$. 
That is, $\pi_F(\cdot)$ is the projection  onto the coordinates corresponding to $F$.  
Moreover, given $\vv\in\R^n$ and $F\subset [n]$, we denote by $\vv_F$ the vector in $\R^n$ whose $i$-th coordinate equals $v_i$ if $i\in F$, and $0$ otherwise. For $i\in [n]$, we denote by $\e^i$ the $i$-th basis vector, i.e., the vector $(0,\ldots,0,1,0,\ldots,0)$ whose $i$-th coordinate equals $1$. 
If $A,B\subset \R^n$, we write $A\le B$ if for every $\x\in A$ and every $\y\in B$ we have $\x\le\y$. 
Given $\ba,\bb\in \R^n$ such that $\ba\leq\bb$, we denote by  $R_{\ba,\bb}\subset\R^n$ the rectangle 
$$R_{\ba,\bb}:=\{\x\in\R^n;\ \ba\leq\x\leq\bb\}.$$
If $\bb = \ba + \1_n$, we simply write $R_{\ba}$ instead of $R_{\ba,\bb}$. Thus $R_{\0}$ is another way to denote the set $[0,1]^n$. 
Finally, suppose we are given $k<n$ and $M\in \binom{[n]}{k}$,  a point $t\in \R^{n-k}$ and a set $A\subset \R^n$. Then we define 
\[A(t,M) := A \cap \pi_{[n]\setminus M}^{-1}(\{t\}) \, . \] 

We begin with a simple consequence of Fubini's theorem. 

\bigskip

\begin{lemma}\label{one cube}

Let $n\in \N$ and $\vv =(v_1,\dots,v_n)\in\R^n$ be fixed.  
Suppose that $\eps>0$ is such that for every $j\in[n]$ we have $\frac{1}{v_j+1}\geq\eps$ and let  $A\subset [0,1]^n$ be a compact set that satisfies 
\begin{equation}\label{L1hh}
\Le^n(A)>1-\eps^{n+1}.
\end{equation}
Then, for every $i\in [n]$, there exists a chain $V\subset [0,1]^n$ such that $\Ha^1(A\cap V)\geq 1-(v_i+1)\eps $ and 
\begin{equation}\label{L1hhh}
\{\eps\vv\}\leq V\leq   \{\eps (\vv+\1_n)_{[n]\setminus \{i\}}+\e^i\}  \, .
\end{equation}

\end{lemma}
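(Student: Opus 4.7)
The plan is to take $V$ to be an axis-parallel line segment in the $i$-th direction, chosen via a Fubini/averaging argument so as to guarantee a long intersection with $A$. The hypothesis $\eps(v_j+1)\le 1$ is precisely what makes the natural long-thin ``tube'' fit inside $[0,1]^n$, and the deficit $\eps^{n+1}$ in~\eqref{L1hh} is calibrated exactly to the volume of that tube.

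Concretely, I would introduce the rectangle $R := R_{\eps\vv,\,\eps(\vv+\1_n)_{[n]\setminus\{i\}}+\e^i}$, whose $i$-th side has length $1-\eps v_i$ and whose remaining $n-1$ sides have length $\eps$, so that $R\subset[0,1]^n$ and $\Le^n(R)=\eps^{n-1}(1-\eps v_i)$. Slicing $R$ by the hyperplanes orthogonal to $\e^i$ produces an $(n{-}1)$-parameter family of axis-parallel segments $L_t$ indexed by the base $T:=\prod_{j\ne i}[\eps v_j,\eps(v_j+1)]\subset\R^{n-1}$, which has $\Le^{n-1}(T)=\eps^{n-1}$. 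Each $L_t$ is a chain (its points are totally ordered by their $i$-th coordinate) of $\Ha^1$-length $1-\eps v_i$ which clearly satisfies the two inclusions required in~\eqref{L1hhh}, so any such $L_t$ is a legitimate candidate for $V$.

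Fubini then yields
\[
\int_T \Ha^1(A\cap L_t)\,d\Le^{n-1}(t)=\Le^n(A\cap R)\ge\Le^n(R)-\Le^n([0,1]^n\setminus A)>\eps^{n-1}(1-\eps v_i)-\eps^{n+1},
\]
using~\eqref{L1hh}, and dividing by $\Le^{n-1}(T)=\eps^{n-1}$ delivers some $t^\ast\in T$ with $\Ha^1(A\cap L_{t^\ast})>(1-\eps v_i)-\eps^2\ge 1-(v_i+1)\eps$, the last step being the elementary bound $\eps^2\le \eps$. Setting $V:=L_{t^\ast}$ finishes the proof. I do not expect a real obstacle here: the only idea is to guess the right tube, after which everything is a textbook Fubini-plus-pigeonhole estimate; the calibration of the constants $\eps^{n+1}$ and $\eps^{n-1}(1-\eps v_i)$ in the hypothesis is precisely what points one to a rectangle of these dimensions.
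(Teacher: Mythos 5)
Your proposal is correct and uses essentially the same idea as the paper: a Fubini argument over the $(n-1)$-dimensional base $\prod_{j\ne i}[\eps v_j,\eps(v_j+1)]$ to find a good fiber parallel to $\e^i$. The only differences are presentational — you restrict the fiber to the $[\eps v_i,1]$-range before averaging and pick a good $t^\ast$ directly, while the paper first finds a full fiber meeting $A$ in measure $\ge 1-\eps$ by contradiction and then truncates it to $R_{\eps\vv,\1_n}$, losing an additional $\eps v_i$.
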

\begin{proof}
Fix $i\in n$ and denote by $\vv^{(i)}$ the vector $\pi_{[n]\setminus \{i\}}(\vv)$. . 
Consider the rectangle $R :=\eps R_{\vv^{(i)}}$. 
Since $\frac{1}{v_j+1}\geq\eps$ for every $j\in[n]$ we have $R\subset [0,1]^{n-1}$. We now show that there exists $t_0\in R$ such that $\Ha^1(A(t_0,\{i\}))\geq 1-\eps$. Assume, towards a contradiction, that  there does not exist such a $t_0\in R$. Then for every $t\in R$ we have
\begin{equation}\label{L1h}
\Ha^1\big([0,1]^n\setminus A(t,\{i\})\big) = \Ha^1\big(([0,1]^n\setminus A)(t,\{i\}) \big) >\eps.
\end{equation}
By \eqref{L1hh}, \eqref{L1h}, Fubini's theorem and the fact that $R\subset [0,1]^{n-1}$ and $\Le^{n-1}(R)=\eps^{n-1}$, we conclude 
\begin{equation*}
\begin{aligned}
\eps^{n+1}>\Le^n([0,1]^n\setminus A)\geq\Le^n\left(\bigcup_{ t \in R}([0,1]^n \setminus A)(t,\{i\})\right)\geq\eps\cdot \Le^{n-1}(R)=\eps^{n+1},
\end{aligned}
\end{equation*}
which is a contradiction. Hence there exists $t_0\in R$ such that $\Ha^1(A(t_0,\{i\}))\geq 1-\eps$. 
Now consider the set 
$$V:=A(t_0,\{i\})\cap R_{\eps\vv,\1_n} \, .$$
Since $t_0\in R$ and $V\subset R_{\eps\vv,\1_n}$ we readily obtain (\ref{L1hhh}). Clearly $V$ is chain and, since $V=A\cap V$,  we conclude
$$\Ha^1(A\cap V)=\Ha^1(V)\geq\Ha^1(A(t_0,\{i\}))-\eps v_i\geq 1-(v_i+1)\eps\, ,$$
as desired.
\end{proof}

Given $\ba=(a_1,\ldots,a_n),\bb=(b_1,\ldots,b_n)\in \R^n$, let $\BO(\ba,\bb) = \{l\in [n]: a_l \neq b_l\}$ be the set of indices for which the corresponding coordinates are different.

\bigskip

\begin{lemma}\label{Cubes}
Let $n,k\in\N$ be fixed. 
Let $\vv=(v_1,\dots,v_n)\in\R^n_{+}$ be given and assume that  
$\dd^1 \le\cdots \le \dd^k\in\N^n$ is a chain of vectors with non-negative integer coordinates. 
Set $\alpha:=\max\{2n,v_1,\dots,v_n\}$ and $\BO:=\BO(\dd^1,\dd^k)$, and fix $\eps\in (0,\frac{1}{\alpha +2})$. 
Let  $A\subset\R^n$ be a compact set and assume further that there exists 
$W\subset[k]$ such that for every $i\in W$ we have 
\begin{equation*}
\Le^n(A\cap R_{\dd^i})>1-\eps^{2n}2^{-n} \, .
\end{equation*}
Then there exists a chain $L\subset \R^n$ such that 
\begin{equation}\label{inequality}
\Ha^1(A\cap L)\geq(1-(\alpha+2)\eps)(1-\eps)^n(|W|-1)
\end{equation}
and 
\begin{equation}\label{inequality_2}
\{\dd^1+\eps\vv\}\leq L\leq\{\dd^k+\eps(2|\BO|\1_{\BO}+  (\vv+\1)_{[n]\setminus\BO})\}.
\end{equation}
\end{lemma}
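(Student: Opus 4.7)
The plan is to prove Lemma~\ref{Cubes} by induction on $|W|$, combining a Fubini-type slicing in the non-$\BO$ coordinates with iterated applications of Lemma~\ref{one cube}. For the base case $|W|\le 1$ one takes $L=\emptyset$: both inequalities on $L$ hold vacuously and the right-hand side of the required $\Ha^1$-bound is non-positive.

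For the inductive step with $|W|\ge 2$, write $W=\{w_1<\cdots<w_s\}$. I would first apply Lemma~\ref{one cube} to the translated set $(A-\dd^{w_1})\cap[0,1]^n$, whose complement has Lebesgue measure at most $2^{-n}\eps^{2n}\le \eps^{n+1}$ for admissible $\eps$. Choosing an exit direction $j_1\in\BO(\dd^{w_1},\dd^{w_1+1})$, which is non-empty and automatically contained in $\BO$, the lemma produces a chain segment $V^1\subset\R^n$ with
\[
\{\dd^{w_1}+\eps\vv\}\le V^1\le \{\dd^{w_1}+\eps(\vv+\1)_{[n]\setminus\{j_1\}}+\e^{j_1}\}
\]
and $\Ha^1(A\cap V^1)\ge 1-(v_{j_1}+1)\eps\ge 1-(\alpha+1)\eps$. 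Then I would invoke the inductive hypothesis on the truncated chain $\dd^{w_1+1}\le\cdots\le \dd^k$ with shifted index set $W':=(W\setminus\{w_1\})-w_1$ and a new parameter $\tilde\vv$ defined by $\tilde v_\ell=v_\ell+1$ when $\ell\ne j_1$ and $\dd^{w_1}_\ell=\dd^{w_1+1}_\ell$, and $\tilde v_\ell=v_\ell$ otherwise. This choice ensures that the starting point $\dd^{w_1+1}+\eps\tilde\vv$ of the inductive chain $L'$ lies above the endpoint of $V^1$, so $L:=V^1\cup L'$ is a chain, and a direct computation yields $\Ha^1(A\cap L)\ge (1-(\alpha+1)\eps)+(|W|-2)(1-(\alpha+2)\eps)(1-\eps)^n\ge (|W|-1)(1-(\alpha+2)\eps)(1-\eps)^n$ for sufficiently small $\eps$.

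The main obstacle is ensuring that the upper bound on $L$ is preserved through the recursion, particularly in coordinates $\ell\notin\BO$ where the target bound $\dd^k_\ell+\eps(v_\ell+1)$ is tight but the gluing keeps inflating $\tilde v_\ell$. I would resolve this by prefacing the induction with a Fubini step that uses the strong density hypothesis $\Le^n(A\cap R_{\dd^i})>1-2^{-n}\eps^{2n}$ to select, for each non-$\BO$ coordinate, a common value $t_\ell\in[\dd^1_\ell+\eps v_\ell,\dd^1_\ell+\eps(v_\ell+1)]$ at which the slice of $A$ remains dense in most cubes indexed by $W$; iterating this slicing argument over all $n$ coordinates accounts for the factor $(1-\eps)^n$ in the conclusion. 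Within the resulting fixed-slice configuration the chain effectively moves only in the $|\BO|$ coordinates of $\BO$, and a careful accounting of how often each $\BO$-coordinate is chosen as an exit direction bounds the total growth of $\tilde\vv$ in those directions by $2|\BO|$, matching the stated upper bound on $L$.
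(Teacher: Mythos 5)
Your induction on $|W|$ is genuinely different from the paper's, which inducts on the ambient dimension $n$ and splits into the cases $|\BO|<n$ (Fubini reduction to dimension $|\BO|$) and $|\BO|=n$ (a run decomposition). The gap in your argument is precisely the one you flag and then wave away: the growth of $\tilde\vv$ is not controlled by $|\BO|$, and the claimed ``careful accounting'' does not exist.

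Concretely, take $n=2$, $W=[k]$, and the chain $\dd^1=(0,0)$, $\dd^2=(0,1)$, \ldots, $\dd^{k-1}=(0,k-2)$, $\dd^k=(1,k-2)$, so that $\BO=[2]$. The exit direction at each of the first $k-2$ gluings is $j=2$, and your rule $\tilde v_\ell\mapsto v_\ell+1$ whenever $\dd^{w_1}_\ell=\dd^{w_1+1}_\ell$ increments $\tilde v_1$ by $1$ at every step, so after $r$ steps $\tilde v_1\approx v_1+r$. For $k$ large this pushes the inductive starting point $\dd^{w_1+r}+\eps\tilde\vv$ outside the cube $R_{\dd^{w_1+r}}$ and violates the target bound $\dd^k+2|\BO|\eps\,\1_{\BO}$ in \eqref{inequality_2}; moreover the lemma's own hypothesis $\eps<1/(\tilde\alpha+2)$ can already fail in the recursive call once $\tilde v_1>\alpha$. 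No Fubini pre-processing in the non-$\BO$ coordinates helps here, since $\BO=[n]$ means there are none, and ``how often a $\BO$-coordinate is chosen as exit direction'' gives nothing like a $2|\BO|$ bound: a single coordinate can be the exit direction on arbitrarily many consecutive steps.

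The paper's resolution is structural rather than quantitative. When $|\BO|=n$ it partitions $\dd^1,\ldots,\dd^k$ into maximal runs $\dd^{n_i},\ldots,\dd^{n_{i+1}-1}$ on which $\BO(\dd^{n_i},\dd^{n_{i+1}-1})\subsetneq[n]$, handles each run via the $|\BO|<n$ case (a Fubini slice followed by a genuine drop to dimension $|\BO_i|<n$, invoking the inductive hypothesis in lower dimension), and bridges consecutive runs with a single application of Lemma~\ref{one cube}. The decisive feature is that at each run boundary \emph{every} coordinate of $\dd^{n_{i+1}}$ strictly exceeds the corresponding coordinate of $\dd^{n_{i+1}-1}$, hence by at least $1>\eps(\alpha+2)$, which allows the perturbation vector to be reset to the fixed $\vv^{i+1}=2n\1$ regardless of how long the preceding run was. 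That resetting mechanism is what keeps \eqref{inequality_2} under control, and it has no analogue in an induction on $|W|$ alone.
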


\begin{proof}

If $|W|\leq1$, then we may choose $L=\emptyset$ and the result follows. 
So we may sssume that $k\geq|W|>1$. In particular, we have $\BO\neq\emptyset$. 
Notice that the assumptions imply that $\eps$ satisfies $\eps(\vv+2\cdot\1_n)\leq\1_n$, a fact that will be used several times in the proof.

We proceed by induction on the dimension, $n$. 
The case $n=1$ is trivial; we may choose $L$ to be the interval $[\dd^1+\vv\eps,\dd^k]$.
Now, assuming that the lemma holds true for every integer less than or equal to $n-1$,  we prove it for $n$. We distinguish two cases.

First assume that $|\BO|<n$. Clearly, we have $|\BO|>0$ and 
\begin{equation}\label{Same projection}
\pi_{[n]\setminus \BO}(\dd^l)=\pi_{[n]\setminus \BO}(\dd^1)\, , \, \text{ for all } \, l\in [k] \, . 
\end{equation}
To simplify notation, set $\ba := \pi_{[n]\setminus \BO}(\dd^1+\eps\vv)$, $\bb := \pi_{[n]\setminus \BO}(\dd^1+\eps(\vv+\1))$ 
and $C:=\bigcup_{j\in W}R_{\dd^j}$. Notice that $\Le^n(C)=|W|$. 
By (\ref{Same projection}) we have
\begin{equation}\label{Size of section}
\Le^n\left(\bigcup_{t\in R_{\ba,\bb}}C(t,\BO)\right)=\Le^{n-|\BO|}(R_{\ba,\bb})\cdot \Le^n(C)=\eps^{n-|\BO|}\Le^{n}(C)\, .
\end{equation}
Since $\Le^n(A\cap R_{\dd^i} )>1-\eps^{2n}2^{-n}$ for every $i\in W$, we have 
$$\Le^n(C\setminus A)<\Le^{n}(C)\eps^{2n}2^{-n} \, ,$$
which, combined with \eqref{Size of section}, yields 
\begin{equation*}
\begin{aligned}
\Le^n\left(\bigcup_{t\in R_{\ba,\bb}}C(t,\BO)\setminus A\right)&\leq\Le^n(C\setminus A)<\Le^n(C)\eps^{2n}2^{-n}\\
&=\Le^n\left(\bigcup_{t\in R_{\ba,\bb}}C(t,\BO)\right)\eps^{n+|\BO|}2^{-n}\\
&\leq\Le^n\left(\bigcup_{t\in R_{\ba,\bb}}C(t,\BO)\right)\eps^{2|\BO|+1}2^{-|\BO|-1}.
\end{aligned}
\end{equation*}
Observe that $\pi_{\BO}:\pi_{[n]\setminus \BO}^{-1}(\{t\})\to\R^{|\BO|}$ preserves $\Ha^{|\BO|}$-measure. 
Moreover, notice that $\Le^{|\BO|}(\pi_{\BO}(C(t,\BO)))=|W|$.  Using Fubini Theorem we find $t\in R_{\ba,\bb}$ such that
\begin{equation}\label{Big section}
\Le^{|\BO|}(\pi_{\BO}(C(t,\BO)\setminus A))\leq \Le^{|\BO|}(\pi_{\BO}(C(t,\BO)))\eps^{2|\BO|+1}2^{-|\BO|-1}.
\end{equation}
Now, we find $\tilde{W}\subset W$ such that for every $l\in\tilde{W}$ we have
\begin{equation}\label{Nice cubes}
\Le^{|\BO|}(\pi_{\BO}( R_{\dd^l} (t,\BO)\cap A))\geq 1-\eps^{2|\BO|}2^{-|\BO|}.
\end{equation}
We show that $|\tilde{W}|-1\geq(1-\eps)(|W|-1)$. Assume, for the sake of contradiction, that 
$$|W\setminus\tilde{W}|>\eps(|W|-1)\geq\eps\frac12|W|.$$
Then for every $l\in W\setminus\tilde{W}$ we have
$$
\Le^{|\BO|}(\pi_{\BO}( R_{\dd^l} (t,\BO)\setminus A))> \eps^{2|\BO|}2^{-|\BO|}.
$$
Thus
\begin{equation*}
\begin{aligned}
\Le^{|\BO|}(\pi_{\BO}(C(t,\BO)\setminus A))&\geq\Le^{|\BO|}\left(\pi_{\BO}\left(\bigcup_{l\in W\setminus\tilde{W}}R_{\dd^l}(t,\BO)\setminus A\right)\right)\\
&>|W\setminus\tilde{W}|\eps^{2|\BO|}2^{-|\BO|}>\eps \frac12|W|\eps^{2|\BO|}2^{-|\BO|}\\
&=\eps^{2|\BO|+1}2^{-|\BO|-1}\Le^{|\BO|}(\pi_{\BO}(C(t,\BO))),
\end{aligned}
\end{equation*}
which contradicts (\ref{Big section}). Since $|\BO|<n$ we may apply the induction hypothesis for $\pi_{\BO}(\vv)$, $|\BO|$, $k$, $\eps$, $\pi_{\BO}(\dd^1),\dots,\pi_{\BO}(\dd^k)$, $\tilde{W}$, $\pi_{\BO}(A(t,\BO))$ instead of $\vv$, $n$, $k$, $\eps$, $\dd^1,\dots,\dd^k$, $W$, $A$ and obtain a chain $\tilde{L}\subset\R^{|\BO|}$ such that 
\begin{equation*}
\begin{aligned}
\Ha^1(\tilde{L}\cap \pi_{\BO}(A(t,\BO)))
&\geq(1-(\tilde{\alpha}+2)\eps)(1-\eps)^{|\BO|}(|\tilde{W}|-1)\\
&\geq(1-(\alpha+2)\eps)(1-\eps)^{|\BO|+1}(|W|-1)\\
&\geq(1-(\alpha+2)\eps)(1-\eps)^n(|W|-1),
\end{aligned}
\end{equation*}
where $\tilde{\alpha}=\max\{2|\BO|,v_1,\dots,v_n\}$ and
\begin{equation*}
\{\pi_{\BO}(\dd^1+\eps\vv)\}\leq\tilde{L}\leq\{\pi_{\BO}(\dd^k+2|\BO|\eps\1)\}.
\end{equation*}
Now define the set $L:=\pi^{-1}_{\BO}(\tilde{L})\cap\pi^{-1}_{[n]\setminus \BO}(t)$. Clearly, $L$ is a chain. Since $t\in R_{\ba,\bb}$ we  obtain \eqref{inequality_2}. Finally,  the fact that $\pi_{\BO}$ is a linear isometry on $A (t,\BO)$ implies \eqref{inequality}. The proof of the first case is thus completed.  

Now assume that $|\BO|=n$. First we show that we may additionally suppose that $W=[k]$. 
Let $W=\{i_1,\ldots,i_p\}$, where $p>1$ and  $i_1<\dots<i_p$. Consider the sets 
\[
\tilde{\BO}:=\BO(\dd^{i_1},\dd^{i_p}), \, \tilde{\BO}_1:=\BO(\dd^1,\dd^{i_1})\setminus\tilde{\BO}\, \text{ and } \, \tilde{\BO}_2:=\BO(\dd^{i_p},\dd^{i_k})\setminus(\tilde{\BO}\cup\tilde{\BO}_1) \, .
\]

Clearly, $\tilde{\BO}\cup\tilde{\BO}_1\cup\tilde{\BO}_2=\BO=[n]$. Now we can replace 
$\vv$, $\dd_1,\dots,\dd_k$ with 
$\vv_{\tilde{\BO}\cup\tilde{\BO}_2}$, $\dd^{i_1},\dots,\dd^{i_p}$ in the assumptions of the lemma,  and obtain the desired $L$ for the latter.  If we have the desired $L$ for $\vv_{\tilde{\BO}\cup\tilde{\BO}_2}$, $\dd^{i_1},\dots,\dd^{i_p}$, then \eqref{inequality} will readily  follow, and we may deduce \eqref{inequality_2} upon observing that 
\begin{equation*}
\begin{aligned}
\dd^1+\eps\vv&\leq\dd^{i_1}+\vv_{\tilde{\BO}\cup\tilde{\BO}_2},\\
\dd^{i_p}+\eps(2|\tilde{\BO}|\1_{\tilde{\BO}}+(\vv_{\tilde{\BO}\cup\tilde{\BO}_2}+\1)_{[n]\setminus\tilde{\BO}})&=\dd^{i_p}+\eps(2|\tilde{\BO}|\1_{\tilde{\BO}}+\vv_{\tilde{\BO}_2}+\1_{[n]\setminus\tilde{\BO}})\\
&\leq\dd^k+\eps(2|\tilde{\BO}|\1_{\tilde{\BO}}+\1_{[n]\setminus\tilde{\BO}})\\
&\leq\dd^k+\eps 2n\1.
\end{aligned}
\end{equation*}

Hence we may assume that $|\BO|=n$ and $|W|=k>1$. We proceed by finding $s\in\N$, a sequence of non-negative integers  $n_1,\ldots,n_s \in\N$,  vectors $\vv^1,\ldots,\vv^s\in\R^n$, and sets  $\BO_1,\ldots,\BO_{s-1}\subset [n]$ and $L_1,\ldots,L_{s-1}\subset \R^{n}$  that satisfy, and are defined via, the following seven conditions:
\begin{itemize}
\item[(i)]  Set $n_1=1$ and, for $i\in[s-2]$, let $n_{i+1}=\min\{l\in[k];\ \BO(\dd^{n_i},\dd^l)=[n]\}$. We clearly have $1=n_1<n_2<\dots<n_s=k+1$. 
\item[(ii)] Define $\vv^1=\vv$ and $\vv^i=2n\1$, for $i\in\{2,\dots,s\}$. 
\item[(iii)] For $i\in[s-1]$, let $\BO_i:=\BO(\dd^{n_i},\dd^{n_{i+1}-1})$.
\item[(iv)] For every $i\in[s-2]$ the set $L_i$ is a  chain and
$\{\dd^{n_i}+\eps\vv^i\}\leq L_i\leq\{\dd^{n_{i+1}}+\eps\vv^{i+1}\}.$
\item[(v)] The set $L_{s-1}$ is a  chain and
$\{\dd^{n_{s-1}}+\eps\vv^{s-1}\}\leq L_{s-1}\leq\{\dd^{k}+\eps\vv^{s}\}.$
\item[(vi)] For every $i\in[s-2]$ we have 
$\Ha^1(A\cap L_i)\geq(1-(\alpha+2)\eps)(1-\eps)^{n}(n_{i+1}-n_i).$
\item[(vii)] $\Ha^1(A\cap L_{s-1})\geq(1-(\alpha+2)\eps)(1-\eps)^{n}(k-n_{s-1}).$
\end{itemize}
Notice that, since $k>1$ and $\BO=[n]$, we have $s\geq3$.
It remains to show how to find the sets $L_i$, for $i\in [s-1]$, such that (iv)-(vii) are satisfied.
To this end, we first construct sets $T_i, V_i\subset\R^n$ that satisfy the following:
\begin{itemize}
\item[(a)] The sets $T_i$, $V_i$ are chains, for all $i\in[s-1]$.
\item[(b)] For $i\in[s-1]$ we have 
$\{\dd^{n_i}+\eps\vv^i\}\leq T_i\leq \{\dd^{n_{i+1}-1}+\eps(2|\BO_i|\1_{\BO_i}+(\vv^i+\1)_{[n]\setminus\BO_i})\}.$
\item[(c)] For $i\in[s-2]$ we have 
$\{\dd^{n_{i+1}-1}+\eps(2|\BO_i|\1_{\BO_i}+(\vv^i+\1)_{[n]\setminus\BO_i})\}\leq V_i\leq\{\dd^{n_{i+1}}+\eps\vv^{i+1}\}.$
\item[(d)] $\Ha^1(T_i\cap A)\geq(1-(v+2)\eps)(1-\eps)^{2n+1}(n_{i+1}-n_i-1)$ for $i\in[s-1]$.
\item[(e)] $\Ha^1(V_i\cap A)\geq(1-(v+2)\eps)(1-\eps)^{2n+1}$, for $i\in[s-2]$.

\end{itemize}

We begin with  the sets $T_i$. Consider  $\dd^{n_i},\dots,\dd^{n_{i+1}-1}$ and $\vv^i$ instead of $\dd^1,\dots,\dd^k$ and $\vv$ and observe that we are in the same situation as in the first part of the proof (i.e., the case $|\BO|<n$). So, we are able to find $T_i$ satisfying (a), (b), (d).

Now, we proceed with the sets $V_i$. Put $V_{s-1}:=\emptyset$. For $i\in[s-2]$  choose some $l\in\BO(\dd^{n_{i+1}-1},\dd^{n_{i+1}})$ and use Lemma~\ref{one cube} for $2|\BO_i|\1_{\BO_i}+(\vv^i+\1)_{[n]\setminus\BO_i}$, $\eps$ and $l$. Since $\frac{1}{n+2}>\eps$, $\frac{1}{v_i+2}>\eps$ and $n_{i+1}-1\in W$ we can find $V_i$ satisfying (a), (e) and 
\begin{equation*}
\begin{aligned}
\{\dd^{n_{i+1}-1}+\eps(2|\BO_i|\1_{\BO_i}+(\vv^i+\1)_{[n]\setminus\BO_i})\}\\
\leq V_i\leq\{\dd^{n_{i+1}-1}+\e^l+\eps((2|\BO_i|+1)\1_{\BO_i}+(\vv^i+2\cdot\1)_{[n]\setminus(\BO_i\cup\{l\})})\}\\
\leq\dd^{n_{i+1}}+\eps(2|\BO_i|+1)\1_{\BO_i}\leq\{\dd^{n_{i+1}}+\eps\vv^{i+1}\}.
\end{aligned}
\end{equation*}
In the last but one inequality, we used (ii). Thus $V_i$ also satisfy (c).

Now let $L_i:=T_i\cup V_i$. Since $V_{s-1}=\emptyset$ we immediately deduce (iv) and (v) from (a), (b) and (c). We can also derive (vi) and (vii) from (d) and (e). We have thus completed the construction of the sets $L_i$. 

Finally, we put $L:=\bigcup_{i=1}^{s-1}L_i$. By (iv), (v), (vi) and (vii), it follows that $L$ is a chain satisfying \eqref{inequality} and \eqref{inequality_2}.
\end{proof}

Recall, from Section~\ref{sec:k_sperner}, the definition of cubes $Q_{\dd}$, where $\dd\in [m-1]_0^n$. 
Given a set of cubes $\mathcal{Q}=\{Q_{\dd_1},\ldots, Q_{\dd_k}\}$, we say that $\mathcal{Q}$ is a \emph{chain of $m$-cubes} if $\dd_1\le\cdots\le \dd_k$ is a chain in $[m-1]_0^n$.  

\bigskip

\begin{theorem}\label{thm_chain}

Let $n\in\N$, $\frac{1}{2n+2}>\varepsilon>0$, $A\subset\R^n$ be measurable and $\mathcal{Q}$ be a chain of $m$-cubes such that for every $Q\in\mathcal{Q}$ we have $\lambda_n(A\cap Q)>1-\varepsilon^{2n}2^{-n}$. Then there exists chain $L\subset\R^n$ such that $\mathcal{H}^1(A\cap L)\geq(1-(2n+2)\varepsilon)(1-\varepsilon)^n\frac{|\mathcal{Q}|-1}{m}$.

\end{theorem}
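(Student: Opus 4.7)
My plan is to reduce Theorem~\ref{thm_chain} to Lemma~\ref{Cubes} by a rescaling argument; once the $m$-cubes are dilated to unit cubes, the hypothesis translates essentially verbatim to the hypothesis of the lemma.

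Concretely, I would write $\mathcal{Q}=\{Q_{\dd^1},\ldots,Q_{\dd^k}\}$ with $\dd^1\le\cdots\le\dd^k$ in $[m-1]_0^n$, where $k=|\mathcal{Q}|$. Consider the similarity $\varphi:\R^n\to\R^n$, $\x\mapsto m\x$, which bijectively maps each $m$-cube $Q_{\dd^i}$ onto the unit cube $R_{\dd^i}$, scales $\Le^n$-measure by the factor $m^n$, and scales $\Ha^1$-measure by the factor $m$. Before invoking Lemma~\ref{Cubes} I need a compact set, so I would first use inner regularity of $\Le^n$ to choose, for each $i\in[k]$, a compact set $K_i\subset A\cap Q_{\dd^i}$ with $\Le^n(K_i)>(1-\eps^{2n}2^{-n})\Le^n(Q_{\dd^i})$, and then set $K:=\bigcup_{i=1}^k K_i$ (compact as a finite union) and $A':=\varphi(K)$. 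The rescaling immediately yields $\Le^n(A'\cap R_{\dd^i})>1-\eps^{2n}2^{-n}$ for every $i\in[k]$.

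Next, I would apply Lemma~\ref{Cubes} with parameters $\vv:=\0$ (so that $\alpha=\max\{2n,0,\ldots,0\}=2n$), the chain $\dd^1\le\cdots\le\dd^k$, $W:=[k]$, the compact set $A'$, and the given $\eps$; the inequality $\eps<\tfrac{1}{2n+2}=\tfrac{1}{\alpha+2}$ is exactly the standing hypothesis. The lemma produces a chain $L'\subset\R^n$ satisfying
$$\Ha^1(A'\cap L')\ge(1-(2n+2)\eps)(1-\eps)^n(k-1).$$

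To conclude, I would pull back through the inverse similarity: set $L:=\varphi^{-1}(L')$. Since $\varphi^{-1}$ preserves coordinatewise order, $L$ is again a chain. Because $A'\cap L'=\varphi(K\cap L)$ and $\Ha^1$ rescales by $1/m$, this gives
$$\Ha^1(A\cap L)\ge\Ha^1(K\cap L)=\frac{1}{m}\,\Ha^1(A'\cap L')\ge(1-(2n+2)\eps)(1-\eps)^n\frac{|\mathcal{Q}|-1}{m},$$
which is the claimed bound. The real work is done by Lemma~\ref{Cubes}; no single step of this deduction is a serious obstacle, the only care needed being to keep track of the differing scaling behaviours of $\Le^n$ (factor $m^n$) and $\Ha^1$ (factor $m$) under $\varphi$, and to trim $A$ to a compact set in a way that respects the strict inequality in the hypothesis.
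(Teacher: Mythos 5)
Your proposal is correct and follows exactly the paper's route: pass to a compact subset via inner regularity, rescale by the similarity $\x\mapsto m\x$, and apply Lemma~\ref{Cubes} with $\vv=\0$ (hence $\alpha=2n$) and $W=[k]$. You have merely spelled out the bookkeeping (the differing scalings of $\Le^n$ and $\Ha^1$, the order-preservation of the similarity) that the paper leaves implicit.
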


\begin{proof}
Since Lebesque measure is inner regular, we may assume that $A$ is compact. The result follows  immediately  from Lemma~\ref{Cubes} upon setting $\vv=\0$ and rescaling to $m$-cubes.
\end{proof}

The proof of Lemma~\ref{D_e_chain} is almost complete. 

\begin{proof}[Proof of Lemma~\ref{D_e_chain}]
Let $\dd_1\le \cdots \le \dd_t$ be a chain in $D_{\eps}$ and assume contrariwise that $t\ge m\kappa' + n +1$, where 
$\kappa' =\frac{\kappa}{(1-(2n+2)\eps) \cdot (1-\eps)^n}$.
Then $\mathcal{Q}=\{Q_{\dd_1},\ldots, Q_{\dd_t}\}$ is a chain of $m$-cubes that satisfies the hypothesis of Theorem~\ref{thm_chain} and so there exists chain a $L\subset\R^n$ such that 
\[
\Ha^1(A\cap L) > \kappa\, ,
\]
contrary to the assumption that $\Ha^1(A\cap C)\le \kappa$, for all chains $C\subset [0,1]^n$. 
The result follows. 
\end{proof}

\section{Concluding remarks}\label{conclusion}

Notice that in Theorem~\ref{cont_k_sperner} we assume that $\kappa >0$. 
However, it seems reasonable to ask what happens when $\kappa = 0$. 
In this case we are dealing with a measurable set $A\subset [0,1]^n$ which satisfies 
\begin{equation}\label{eq_concl}
\Ha^1(A\cap C) = 0, \; \text{ for all chains } \; C\subset [0,1]^n \, . 
\end{equation}
What is the maximum ``size'' of a measurable $A\subset [0,1]^n$ that satisfies~\eqref{eq_concl}? 
It is shown in~\cite{EMPR} that if $A\subset [0,1]^n$ is such that 
\begin{equation}\label{eq_concl_1}
\Ha^0(A\cap C) \le 1, \; \text{ for all chains } \; C\subset [0,1]^n \, , 
\end{equation}
then the Hausdorff dimension of $A$ is less than or equal to $n-1$, and that the bound is sharp. 
Let us remark that a set $A$ satisfying~\eqref{eq_concl_1} is referred to as an \emph{antichain}. 
An example of an antichain in the unit $n$-cube is the set 
\begin{equation}\label{antichain}
A_t = \left\{(x_1,\ldots,x_n)\in [0,1]^n : \sum_{i=1}^{n} x_i = t \right\} ,\, \text{ where } \, t\in [0,n] .
\end{equation}
Since an antichain, by definition, satisfies~\eqref{eq_concl_1}  it readily follows that it satisfies~\eqref{eq_concl}. 
This implies that there exist subsets of the unit $n$-cube that satisfy~\eqref{eq_concl} and whose Hausdorff dimension is equal to $n-1$. In fact, we can say a bit more. 
Recall that $\dim_H(\cdot)$ denotes Hausdorff dimension (see~\cite[p.~86]{Evans_Gariepy}). 

\bigskip 

\begin{theorem}\label{conj_s}
Fix $s\in [0,1]$ and $\beta\ge 0$. Then there exists $A\subset [0,1]^n$  that satisfies  
$\dim_H(A) = n-1 + s$ and 
\[
\Ha^s(A\cap C) \le \beta,  \; \text{ for all chains } \; C\subset [0,1]^n \,  .
\]
\end{theorem}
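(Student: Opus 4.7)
The natural strategy is to thicken the antichain example~\eqref{antichain} by letting the level $t$ vary over a judiciously chosen set $E\subset(0,n)$. More precisely, the plan is to fix a set $E\subset[0,n]$ with
\[
\dim_H(E)=s\qquad\text{and}\qquad \Ha^s(E)\le\beta,
\]
and then take
\[
A:=\phi^{-1}(E)\cap[0,1]^n,\qquad\text{where }\phi(\mathbf{x})=\sum_{i=1}^n x_i.
\]
The existence of such an $E$ is classical when $s\in(0,1]$: a self-similar Cantor set with suitably tuned contraction ratios realises any prescribed positive value of $\Ha^s$, and the case $\Ha^s(E)=0$ is achieved by taking a countable union of Cantor sets of dimensions $s_k\nearrow s$. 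The case $s=0$ covers the (essentially forced) situation $\beta\ge 1$ by letting $E$ be a finite set of cardinality at most $\lfloor\beta\rfloor$.

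The chain condition will follow immediately from the ``anti-diagonal projection'' computation already used in the proof of Theorem~\ref{chains}: if $C\subset[0,1]^n$ is a chain and $\mathbf{x},\mathbf{y}\in C$ with $\mathbf{x}\le\mathbf{y}$, then
\[
\|\mathbf{x}-\mathbf{y}\|\le\sum_{i=1}^n(y_i-x_i)=\phi(\mathbf{y})-\phi(\mathbf{x}),
\]
so the restriction of $\phi$ to $C$ is injective and its inverse is $1$-Lipschitz on $\phi(C)$. Since $\phi(A\cap C)\subset E$, the Lipschitz contraction property of Hausdorff measure gives
\[
\Ha^s(A\cap C)\le \Ha^s(\phi(A\cap C))\le \Ha^s(E)\le \beta,
\]
as required.

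It remains to verify that $\dim_H(A)=n-1+s$. For the lower bound I would place $E$ inside a short subinterval $[\alpha,\alpha+L]\subset(0,n)$ chosen so that the affine map
\[
\Psi(t,y_2,\ldots,y_n):=(t-y_2-\cdots-y_n,\,y_2,\ldots,y_n)
\]
sends $E\times[\tfrac12-\delta,\tfrac12+\delta]^{n-1}$ bi-Lipschitzly into $A$ (which is automatic if $L$ and $\delta$ are small and $\alpha$ is chosen near $n/2$); the product inequality for Hausdorff dimension, applied against a factor of positive $(n-1)$-dimensional Lebesgue measure, then yields $\dim_H(A)\ge (n-1)+s$. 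For the matching upper bound I would fix any $\tau>s$, cover $E$ by intervals $\{I_j\}$ of lengths $r_j$ with $\sum_j r_j^\tau$ arbitrarily small, and cover each slab $\phi^{-1}(I_j)\cap[0,1]^n$ by $O(r_j^{-(n-1)})$ axis-aligned cubes of diameter $O(r_j)$ (the slab has thickness $r_j/\sqrt{n}$ in the $(1,\ldots,1)$-direction, so along the other $n-1$ directions one needs at most $O(r_j^{-(n-1)})$ cubes). Summing gives $\Ha^{n-1+\tau}(A)=O(\sum_j r_j^\tau)$, hence $\dim_H(A)\le n-1+s$.

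The only genuinely delicate step is the first one, namely producing a set $E\subset[0,n]$ of exact Hausdorff dimension $s$ with prescribed $\Ha^s$-measure at most $\beta$, and especially realising $\Ha^s(E)=0$ at the critical dimension; the chain inequality is then essentially free from the projection estimate of Theorem~\ref{chains}, and the dimension computation reduces to standard slicing-and-product arguments.
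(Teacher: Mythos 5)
Your construction is the same as the paper's: thicken the level sets $A_t=\{\sum x_i=t\}$ over a set $E\subset[0,n]$ with $\dim_H(E)=s$ and $\Ha^s(E)\le\beta$, and get the chain bound from the fact that the map $\phi(\mathbf{x})=\sum_i x_i$, restricted to any chain, has a $1$-Lipschitz inverse (the exact projection estimate from the proof of Theorem~\ref{chains}). Where you diverge is in the dimension count: the paper simply invokes~\cite[Theorem~1.2]{Hera_Keleti_Mathe} on Hausdorff dimension of unions of affine subspaces to get $\dim_H(A)=n-1+s$, whereas you verify it directly by a slicing argument (cover $E$ by intervals $\{I_j\}$ of length $r_j$, cover each slab $\phi^{-1}(I_j)\cap[0,1]^n$ by $O(r_j^{-(n-1)})$ cubes of side $\sim r_j$ to get the upper bound) together with the product inequality $\dim_H(E\times B)\ge\dim_H(E)+n-1$ applied to a bi-Lipschitz copy of $E\times[\tfrac12-\delta,\tfrac12+\delta]^{n-1}$ inside $A$ for the lower bound. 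Both arguments are sound; yours is self-contained and arguably more transparent here, at the cost of a couple of lines, while the paper's citation is shorter. Two small points worth tightening in your write-up: (i) "a self-similar Cantor set with suitably tuned contraction ratios realises any prescribed positive value of $\Ha^s$" is a little loose, since varying the ratio changes $s$; the cleaner route is to fix a compact $K_0$ with $0<\Ha^s(K_0)<\infty$ and rescale, $\Ha^s(cK_0)=c^s\Ha^s(K_0)$; (ii) you correctly note that for $s=0$ the statement only makes sense for $\beta\ge1$ (otherwise $\Ha^0(A\cap C)\le\beta$ for singleton chains forces $A=\emptyset$), a degenerate case the paper's proof glosses over, so your explicit restriction is an improvement rather than an omission.
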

\begin{proof} 
Let $A_t, t\in [0,n]$, be as in~\eqref{antichain}. Let $K\subset [0,n]$ be a set such that $\dim_H(K)=s$ and $\Ha^s(K) = \beta$, and define the set 
\[ 
A = \bigcup_{t\in K} A_t \, . 
\] 
It follows from~\cite[Theorem~1.2]{Hera_Keleti_Mathe} that $\dim_H(A) = n-1+s$, and it remains to show the second statement. 

Let $C\subset [0,1]^n$ be a chain and consider the function 
$\theta:[0,1]^n \to [0,n]$ defined via 
\[ 
\theta(x_1,\ldots,x_n) = \sum_{i=1}^{n}x_i \, .
\]
Notice that $\theta$ restricted on $C$ is injective and therefore $\theta$ is a bijection from $C$ onto its image $\theta(C)$. Using a similar argument as in the proof of Theorem~\ref{chains}, we conclude that $\theta^{-1}: \theta(C)\to C$ is Lipschitz with constant $1$. Since $\theta(A\cap C) \subset K$ and $\theta$ is $1$-Lipschitz, we have        
\[ 
\Ha^s(A\cap C)  \le \Ha^s(\theta^{-1}(K)) \le \Ha^s(K) = \beta,
\]
as desired. 
\end{proof}

Given Theorem~\ref{conj_s},  the following problem arises naturally.  
\bigskip

\begin{problem}\label{prbl}
Fix $s\in [0,1]$ and $\beta\ge 0$.  
Let  $A\subset [0,1]^n$ be a measurable set such that $\dim_H(A) = n-1 + s$ and 
$\Ha^s(A\cap C) \le \beta$,  for all chains $C\subset [0,1]^n$. 
What is a sharp upper bound on $\Ha^{n-1+s}(A)$?
\end{problem}

In this article we considered the case $s=1, \beta \in [0,n]$ in Problem~\ref{prbl} . 
The case $s=0, \, \beta =1$ in Problem~\ref{prbl} has been considered in~\cite{EMPR} where it is shown that a set $A\subset [0,1]^n$ for which it holds 
$\Ha^0(A\cap C) \le 1$,  for all chains $C\subset [0,1]^n$, satisfies  $\Ha^{n-1}(A)\le n$, and that the bound is best possible.

\end{document}